\documentclass[a4paper,12pt]{amsart}
\usepackage{mathrsfs}

\usepackage{amsfonts}
\usepackage{amsmath}
\usepackage{amssymb}
\usepackage{enumerate}
\usepackage{hyperref}
\usepackage{latexsym}
\usepackage{color}
\usepackage{cite}
\def\a{\alpha}
\def\b{\beta}

\newcommand\ZZ{\mathbb{Z}} 
 
\newcommand\Aut{\mathrm{Aut}}  
\newcommand\Cay{\mathrm{Cay}}

\newcommand\GL{\mathrm{GL}}

\newtheorem{theorem}{Theorem}[section]
\newtheorem{corollary}[theorem]{Corollary}

\newtheorem{lemma}[theorem]{Lemma}
\newtheorem{proposition}[theorem]{Proposition}
\theoremstyle{definition}
\newtheorem{definition}[theorem]{Definition}

\begin{document}
\title{Isomorphism factorizations of the complete graph into Cayley graphs on CI-groups}

\author[abbreviation]{Huye Chen, Jingjian Li, Hao Yu, Zitong Yu}

\address{Guangxi Center for Mathematical Research   \\
\& Center for Applied Mathematics of Guangxi (Guangxi University) \\
Guangxi University \\
Nanning, Guangxi 530004, P.R. China.}

\email{chenhy280@gxu.edu.cn (H. Chen); \allowbreak lijjhx@gxu.edu.cn (J.J. Li); \newline  haoyu@gxu.edu.cn (H. Yu);
\allowbreak   yuzitong2020@163.com (Z. Yu)}

\begin{abstract}

Isomorphic factorizations of complete graphs originate from the seminal work of Frank Harary and collaborators, who initiated the systematic study of decompositions of complete graphs into pairwise isomorphic spanning subgraphs.
In this paper, we investigate isomorphic factorizations of complete graphs into Cayley graphs on CI-groups.

Let $\Gamma=\Cay(G,S)$ denote the Cayley graph of finite group $G$. We obtain a necessary and sufficient condition on CI-group $G$ so that the complete graph on $|G|$ vertices can be edge-partitioned into  $k$-copies of Cayley graph of the same CI-group $G$ each isomorphic to $\Cay(G,S)$  for some inverse-closed subset $S\subset G\setminus\{1\}$.
Further we give a construction of isomorphic factorizations of the complete graph into Cayley graphs on CI-group.

\end{abstract}

\maketitle \textit{Key words:} isomorphism factorization; CI-group; Cayley graph

\section{Introduction}
All graphs $\Gamma = (V, E)$ considered in this paper are simple, undirected graphs. The vertex set and edge set of $\Gamma$ are denoted by $V(\Gamma)$ and $E(\Gamma)$, respectively.
The automorphism group of $\Gamma$, denoted by $\operatorname{Aut}(\Gamma)$, is the subgroup of $\operatorname{Sym}(V(\Gamma))$ that preserves the edge set $E(\Gamma)$.
A \textit{partition} of a set is a collection of non-empty, pairwise disjoint subsets whose union is the entire set.
Let $\Gamma$ be a graph, and let $\mathcal{P}:=\{P_0,P_1,\cdots,P_{k-1}\}$ be a partition of edge set $E(\Gamma)$ with $k\geq 2$. Then $(\Gamma,\mathcal{P})$ is called a \textit{decomposition} of the graph $\Gamma$.
Moreover, if the subgraph induced by each of the set $P_i$ are all spanning subgraph of $\Gamma$, then the decomposition $(\Gamma,\mathcal{P})$ is called a \textit{factorization} of the graph $\Gamma$, and each spanning subgraph $\Gamma_i$ is called a \textit{factor} of $\Gamma$, where $0\leq i\leq k-1$.
A factorization $(\Gamma, \mathcal{P})$ is termed \textit{isomorphic factorization} if all its factors are pairwise isomorphic. For such a factorization, we denote the common factor (up to isomorphism) by $\Gamma/k$. In the specific case of a complete graph $K_n$, each factor $K_n/k$ in an isomorphic factorization is called a $k$-if graph (where `if' abbreviates isomorphic factorization).

We note that a $2$-if graph is precisely a self-complementary graph, a topic of extensive study (see, e.g., \cite{CO,L,LP2001,LR,LRS,Wetal,LP2000}). A key motivation for investigating self-complementary graphs stems from their applications to the study of Ramsey numbers, a classic problem in graph theory.

Research on isomorphic factorizations is grounded in the seminal series spanning \cite{HR1} to \cite{HR10}. A principal direction that has emerged concerns problems with prescribed symmetries. Typical conditions require $\operatorname{Aut}(\Gamma)$ to act transitively on $\mathcal{P}$, or $\operatorname{Aut}(\Gamma) \cap \operatorname{Aut}(\Gamma_i)$ to act transitively on a factor's edge set, vertex set, or arc set. Work such as \cite{BPP,BBM,BL,CK,DD,FLW,GLP,Metal2007,Metal2008,LP,LP03,L2006,LLP,M,P2008,PLS,X2017} exemplifies this direction of research.


Let $G$ be a finite group, and set $G^*:=G\setminus\{1\}$. For a subset $S$ of $G^*$ satisfying $S^{-1}=S$, the \textit{Cayley graph} $\Gamma=\Cay(G,S)$ is defined as the graph with vertex set $V(\Gamma)=G$ and edge set $E(\Gamma)=\{\{h,g\}|g,h\in G,\,gh^{-1}\in S\}$. And Cayley graph $\Gamma=\Cay(G,S)$ is connected if and only if $\langle S\rangle=G$.
 It is clear that $\Cay(G,S)\cong\Cay(G,S^\sigma)$ for any $\sigma\in\Aut(G)$.
 Let $G$ be a finite group. An inverse-closed subset $S\subset G^*$ is called a \textit{CI-set} of $G$ if, for every inverse-closed subset $T\subset G^*$, an isomorphism $\Cay(G,S)\cong\Cay(G,T)$ implies that $T=S^{\alpha}$ for some $\alpha \in \Aut(G)$. If every inverse-closed subset of $G^*$ is a CI-set, then $G$ is called a \textit{CI-group}.

In their studies of isomorphic factorizations of complete digraphs with prime order, the authors of \cite{CO, CL} constructed both self-complementary and 3-if Cayley graphs. Also, \cite{Wetal} provided a construction method for 2-if Cayley graphs, based on the orbits of group actions. In this paper, we give a construction of $k$-if Cayley graphs on
a $k$-rotational group (see Section 2 for the definiton), and classify all the CI-groups which admit a $k$-if Cayley graphs for some positive integer $k\geq 2$.
A finite group $G$ is said to have \textit{$k$-if property} if there exists a Cayley graph $X=\Cay(G,S)$ of $G$ is a $k$-if Cayley graph.
So, the main result of this paper is determine the CI-groups, which has $k$-if property.
We now state the main result of this paper, which follows from Theorems \ref{Case1}, \ref{Z489}, \ref{Case2}, and \ref{Case3}, is given in Section 4.
\begin{theorem}\label{mainCI}
	Let $G$ be a CI-group. Then $G$ has $k$-if property if and only if $G$ is the direct product of elementary abelian group, and the order of each Sylow subgroup $G_p$ of $G$ satisfy $2k\mid |G_p|-1$ when $p$ is odd; $k\mid |G_p|-1$ when $p=2$.
\end{theorem}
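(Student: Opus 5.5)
We would prove the two directions separately. The sufficiency direction is a direct construction. The necessity direction combines a counting argument with the known classification of CI-groups (Li, Lu and Pálfy). The basic observation is that a $k$-if factorization of $K_{|G|}$ into Cayley graphs on $G$ gives a partition $G^*=S_0\cup\cdots\cup S_{k-1}$ into inverse-closed sets with $\Cay(G,S_i)\cong\Cay(G,S_0)$ for all $i$. Since $G$ is a CI-group, $S_i=S_0^{\sigma_i}$ for some $\sigma_i\in\Aut(G)$.

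\textbf{Counting consequences.} Automorphisms preserve element orders and every characteristic subset. Hence for each $\Aut(G)$-invariant subset $X\subseteq G^*$ the sets $S_i\cap X$ all have the same size, so $k\mid |X|$. If moreover $X$ has no involutions, then each $S_i\cap X$ is a union of pairs $\{g,g^{-1}\}$ with $g\neq g^{-1}$, which gives $2k\mid |X|$. In the abelian case we apply this to $X=G_p^*$, the set of nontrivial elements of $p$-power order. This yields $2k\mid |G_p|-1$ for odd $p$ and $k\mid |G_2|-1$, which is the arithmetic condition of Theorem~\ref{mainCI}.

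\textbf{Necessity of elementary abelian Sylow subgroups.} We invoke the classification of CI-groups: $G=U\times V$ with $\gcd(|U|,|V|)=1$. Here $U$ is abelian with Sylow subgroups elementary abelian or $\ZZ_4,\ZZ_8,\ZZ_9$ (with restrictions), and $V$ is trivial or one of $Q_8$, $A_4$, or the Frobenius-type groups $E(M,2)$, $E(M,4)$. Each non-elementary-abelian factor is then excluded by a suitable characteristic set $X$:
\begin{itemize}
\item for $\ZZ_4$, $\ZZ_8$ and $Q_8$, take the unique involution, so $k\mid 1$;
\item for $\ZZ_9$, take the two elements of order $3$, which have no involutions among them, so $2k\mid 2$;
\item for $A_4$, take the $3$ involutions and the $8$ elements of order $3$, which forces $k\mid 3$ and $2k\mid 8$;
\item for $E(M,2)$ and $E(M,4)$, compare the sizes of the characteristic sets $M^*$ and the elements outside $M$ of various $2$-power orders.
\end{itemize}
In every case these divisibilities force $k=1$. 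Elements of coprime order live in different direct factors, so such a characteristic subset of one factor remains characteristic in $G$. This step, checking the exceptional groups $\ZZ_4,\ZZ_8,\ZZ_9$ and the nonabelian families, is the one I expect to be the main obstacle, since it needs exact orbit counts for $\Aut(G)$ on elements of each order.

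\textbf{Sufficiency.} Write $G=\prod_p G_p$ and identify each $G_p=\ZZ_p^{n_p}$ with the additive group of $\mathbb{F}_{q_p}$, where $q_p=p^{n_p}$. Let $\omega_p$ generate $\mathbb{F}_{q_p}^*$ and put $H_p=\langle \omega_p^k\rangle$, which has index $k$ since $k\mid q_p-1$. For odd $p$ the condition $2k\mid q_p-1$ gives $-1\in H_p$; for $p=2$ we have $-1=1$ anyway. Fix an ordering of the primes and define
\[
S_i=\{g\in G^*: \text{the first nonzero component } g_p \text{ of } g \text{ lies in } \omega_p^iH_p\}.
\]
These sets are inverse-closed and partition $G^*$. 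The automorphism $\sigma=\prod_p(\text{multiplication by }\omega_p)$ maps $S_i$ to $S_{i+1}$ (indices mod $k$). Hence $\Cay(G,S_i)\cong\Cay(G,S_0)$ for all $i$, and these factors give the required $k$-if factorization.
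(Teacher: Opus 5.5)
Your overall strategy is sound, but the necessity half has a genuine coverage gap. That argument is only as complete as the list of CI-groups you run the counting over, and the list you quote is not the full classification. The Li--Lu--P\'alfy form you cite also allows a coprime factor $E(M,3)$, products of an $E(M,2)$, $E(M,4)$ or $Q_8$ factor with an $E(M',3)$ factor, and nonabelian groups with elements of order $8$ or $9$. In the form the paper uses (Proposition~\ref{CIgroup}) you must also dispose of $\ZZ_2^2\rtimes\ZZ_9$, $Q_8\rtimes\ZZ_3$, $Q_8\rtimes\ZZ_9$ and $\ZZ_3^2\rtimes Q_8$. You must further handle every $H_e(M,2^r3^s,l)$, including types $3$ and $6$ and the case $o(z)>e$, and the products $H_2(M,2^r,l)\times H_3(M',3^s,l')$. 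None of these is treated, so as written the implication ``$k$-if $\Rightarrow$ elementary abelian'' is not established.

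Your own tool does close every case. The relevant factor is a characteristic Hall subgroup of $G$ containing a characteristic abelian normal Hall subgroup $M$, so $X=M^*$ gives $k\mid |M|-1$. From $(z^ax)^2=z^{2a}x^{l^a+1}$ and $(z^bx)^3=z^{3b}x^{l^{2b}+l^b+1}$, the involutions are exactly the $z^{o(z)/2}x$, and the elements of order $3$ are exactly the $z^{\pm o(z)/3}x$, with $x$ in the kernel $K\le M$ of a power map. Hence $k\mid |K|$, which divides $|M|$, so $k=1$. The remaining small groups fall into three kinds. Some have a unique involution ($Q_8\rtimes\ZZ_3$, $Q_8\rtimes\ZZ_9$). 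Some have exactly two elements of order $3$ ($\ZZ_2^2\rtimes\ZZ_9$, $\ZZ_9\rtimes\ZZ_2$, $\ZZ_9\rtimes\ZZ_4$). The rest have $|V|-1$ prime, which with $X=V^*$ forces inverse-closed singletons. Note also that the factor $2$ in $2k\mid |X|$ needs $X$ inverse-closed, not just $\Aut(G)$-invariant; this holds for every $X$ you use.

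With that repair, your route differs from the paper's in useful ways. For necessity, the paper first transfers the $k$-if property to characteristic subgroups (Lemma~\ref{subChar}). It then uses several separate devices: $|V|-1$ prime (Lemma~\ref{p+1Grp}), a preliminary reduction to $o(z)=e$ (Lemma~\ref{Z=Z_2346}), and a characteristic $M_2\le M$ whose Sylow arithmetic is played against $k\mid |M_2|$ (Lemma~\ref{Vnoif}). Counting $\Aut(G)$-invariant sets directly in $G$, especially $X=M^*$, collapses all of this to a single divisibility.

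For sufficiency, your ``first nonzero component'' sets are precisely the iterated $AK\cup B$ of Lemma~\ref{RtoDP} built from Singer-cycle data. You take $\sigma$ to be multiplication by a primitive element and $S$ a coset of the index-$k$ subgroup of $\mathbb{F}_q^*$, which contains $-1$ since $2k\mid q-1$; this makes the construction uniform in $k$. The paper instead goes through Lemma~\ref{Costr} with $o(\sigma)=k^e$. Its orbit-splitting step needs $k$ to divide every orbit length on inverse pairs, which is automatic only for prime $k$. It breaks, for example, at $k=4$, $G=\ZZ_3^2$: every $\sigma$ of order $4$ in the Singer cycle satisfies $\sigma^2=-1$, so its orbits on inverse pairs have length $2$. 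Your construction handles such cases directly.
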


We now give an overview of the article. In Section 2, we provide some preliminary results.
In Section 3, we introduce a special group, called the $k$-rotational group, which generalizes the notion of a reflexible group introduced in \cite{FRS}. Using this group, we present a construction of $k$-if Cayley graphs on a $k$-rotational group.
 In the final section, we investigate the existence of $k$-if Cayley graphs on CI-groups.

\section{Preliminaries}
In this paper, we will consider all CI-groups and discuss their $k$-if property. In 1996, Li \cite{DocLiCH} gave the classifications of CI-groups.
For positive coprime integers $l$ and $m$, we denotes the least positive integer $e$ such that $l^e\equiv 1\pmod m$ by $o(l\pmod m)$. We say a group is \textit{homocyclic} if it is a direct product of cyclic groups of the same order.

\begin{definition}\cite[Definition 11.3.1 ]{DocLiCH}
\label{He}
	Let $M$ be abelian group in which all Sylow subgroups are homocyclic, $m$ be the exponent of $M$, $l$ satisfy $1\leq l \leq m$ and $(l(l-1),m)=1$. Set $e=o(l\pmod m)$. Let $r,\, s$ be nonnegative integers and suppose that one of the following holds:
	\begin{table}[h]
		\centering
		\begin{tabular}{|c|c|c|c|c|}
			\hline
			type $ e $ & $ r $ & $ s $ & $ m $ \\
			\hline
			2    &  $\geq 1$ & 0   & $ m $ odd \\
			\hline
			3    &  0   & $\geq 1$ & $ 3 \nmid m $ \\
			\hline
			4    &  $\geq 2$ & 0   & $ m $ odd \\
			\hline
			6    &  $\geq 1$ & $\geq 1$ & $(m, 6) = 1$ \\
			\hline
		\end{tabular}
	\end{table}

\noindent Define $H_e(M,2^r3^s,l):=\langle M,z|z^{2^r3^s}=1,\forall x\in M,x^z=x^l\rangle$ and $H_e(M,2^r3^s,l)$ is called of type $e$.
\end{definition}
It is worth noting that any CI-group $G$ is necessarily an $m$-CI group for every integer $m$ with $1 \leq m \leq |G|$. By combining \cite[Corollary 12.5.2]{DocLiCH} and \cite[Theorem 12.5.1]{DocLiCH}, we arrive at the following proposition.

	\begin{proposition}
		\label{CIgroup}
		Let $G$ be a CI-group, then each Sylow subgroup of $G$ is either an elementary abelian group or isomorphic to one of the following: $\ZZ_4$, $\ZZ_8$, $\ZZ_9$ or $Q_8$. Moreover, $G=U\times V$, where $(|U|,|V|)=1$, $U$ is an abelian group, and $V$ is either trivial or one of the following:
		\begin{enumerate}
			\item $Q_8$, $\ZZ^2_2\rtimes\ZZ_3$, $\ZZ^2_2\rtimes \ZZ_9$, $Q_8\rtimes \ZZ_3$, $Q_8\rtimes \ZZ_9$, $\ZZ^2_3\rtimes Q_8;$
			\item $H_e(M,2^r3^s,l)$ where $M$ is abelian, $r$, $s\geq 0$ and $r+s\geq 1;$
			\item $H_2(M,2^r,l)\times H_3(M',3^s,l')$, where $|M|,2,|M'|$ and $3$ are pairwise coprime; $M$, $M'$ are abelian, $1\leq r\leq 3$, $1\leq s\leq 2$ and $l, l' > 1$.
		\end{enumerate}
	\end{proposition}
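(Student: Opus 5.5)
This proposition is not proved from scratch; it is a repackaging of Li's classification. I will therefore derive it directly from \cite[Theorem 12.5.1]{DocLiCH} and \cite[Corollary 12.5.2]{DocLiCH}, using the remark that a CI-group is an $m$-CI group for every $1\le m\le |G|$.

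The first step concerns the Sylow subgroups. By the remark, $G$ is in particular an $m$-CI group for every $m$, so the restrictions in \cite[Corollary 12.5.2]{DocLiCH} on Sylow subgroups of $m$-CI groups apply to $G$. These restrictions say that each Sylow $p$-subgroup is elementary abelian, or is one of $\ZZ_4$, $\ZZ_8$, $\ZZ_9$, $Q_8$. Where a direct check is wanted, I would use that subgroups of CI-groups are CI-groups. The classical argument for $p$-groups then excludes $\ZZ_{p^2}$ for $p\ge 5$ and $\ZZ_{27}$, $\ZZ_{16}$, and also any non-cyclic, non-elementary abelian $p$-group of exponent $p^2$ other than the listed ones. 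It does so by exhibiting two non-equivalent inverse-closed sets, such as a union of cosets of a subgroup versus a twisted copy, that give isomorphic Cayley graphs.

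The second step is the global structure. I would invoke \cite[Theorem 12.5.1]{DocLiCH}, which describes CI-groups (indeed $m$-CI groups for large $m$) as $U\times V$ with $(|U|,|V|)=1$, $U$ abelian, and $V$ either trivial or one of three shapes:
\begin{itemize}
\item one of the small exceptional groups in (1);
\item a Frobenius-type group $H_e(M,2^r3^s,l)$ as in Definition~\ref{He};
\item the product $H_2\times H_3$ with coprime parameters.
\end{itemize}
Combined with the Sylow restriction from the first step, this pins down the parameter ranges $1\le r\le 3$ and $1\le s\le 2$. These ranges come from $\ZZ_{2^r}$ being one of $\ZZ_2,\ZZ_4,\ZZ_8$ and $\ZZ_{3^s}$ being $\ZZ_3$ or $\ZZ_9$. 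The condition $l,l'>1$ is needed so that the factors are genuinely non-abelian and are not absorbed into $U$.

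The main obstacle is bookkeeping rather than mathematics. One must make sure the list in \cite[Theorem 12.5.1]{DocLiCH} is stated for the right class: it may be stated for $m$-CI groups, or with extra conditions that are automatically satisfied once all $m$ are allowed. One must also check that the coprimality conditions are transcribed correctly, namely $(|U|,|V|)=1$, and $|M|,2,|M'|,3$ pairwise coprime. I would handle this by comparing each case of the cited theorem line by line with items (1)–(3).
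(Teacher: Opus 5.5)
Your proposal is correct and follows the paper's own route: the paper gives no independent argument, and simply notes that a CI-group is $m$-CI for every $1\le m\le |G|$ before combining \cite[Corollary 12.5.2]{DocLiCH} with \cite[Theorem 12.5.1]{DocLiCH}, exactly as you do. Your optional ``direct check'' of the Sylow restriction is not needed, and as sketched it does not yet handle non-abelian $p$-groups such as $D_8$; the citation-based argument is the whole proof.
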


\vskip 5mm

Before considering the $k$-if property of CI-groups, we first prove the following lemmas.

\begin{lemma}\label{toDP}
Suppose $G = H \times K$ and let $A \subset H^*$, $B \subset K^*$. If both $\Cay(H, A)$ and $\Cay(K, B)$ are $k$-if Cayley graphs, then $\Cay(G, AK \cup B)$ is a $k$-if Cayley graph.
\end{lemma}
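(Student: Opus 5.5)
The plan is to realise $\Cay(G,AK\cup B)$ as a lexicographic product and then build the factorization of $K_{|G|}$ coordinatewise. Write elements of $G=H\times K$ as pairs $(h,x)$ with $h\in H$, $x\in K$.

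First, I identify the graph. The set $AK\cup B$ is inverse-closed and avoids $1$, because $A=A^{-1}\subset H^*$, $B=B^{-1}\subset K^*$, and $(ax)^{-1}=a^{-1}x^{-1}\in AK$. Two vertices $(h,x),(h',x')$ are adjacent in $\Cay(G,AK\cup B)$ exactly when one of two things happens:
\begin{itemize}
\item[(i)] $h'h^{-1}\in A$, with $x,x'$ arbitrary;
\item[(ii)] $h=h'$ and $x'x^{-1}\in B$.
\end{itemize}
So $\Cay(G,AK\cup B)$ is the lexicographic product $\Cay(H,A)[\Cay(K,B)]$. The only thing to check is that an element $g=(h'h^{-1},x'x^{-1})$ lies in $AK$ iff its $H$-component lies in $A$, and lies in $B$ iff its $H$-component is $1$ and its $K$-component lies in $B$.

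Second, I fix the given factorizations. By hypothesis there is a factorization of the complete graph on $H$ into spanning subgraphs $\Gamma_0,\dots,\Gamma_{k-1}$, with bijections $\varphi_i\colon H\to H$ that are isomorphisms $\Gamma_i\to\Cay(H,A)$. Likewise there is a factorization of the complete graph on $K$ into $\Delta_0,\dots,\Delta_{k-1}$, with isomorphisms $\psi_i\colon\Delta_i\to\Cay(K,B)$. These factors need not themselves be Cayley graphs; I only use the bijections.

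Third, I define $\Sigma_i$ on vertex set $G$: $\{(h,x),(h',x')\}$ is an edge iff either $\{h,h'\}\in E(\Gamma_i)$, or $h=h'$ and $\{x,x'\}\in E(\Delta_i)$. Thus $\Sigma_i=\Gamma_i[\Delta_i]$. The $\Sigma_i$ partition the edges of $K_{|G|}$. Indeed, an edge with $h\neq h'$ has $\{h,h'\}$ in exactly one $\Gamma_i$. An edge with $h=h'$ has $x\neq x'$, so $\{x,x'\}$ lies in exactly one $\Delta_i$. Each $\Sigma_i$ is spanning.

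Finally, the map $(h,x)\mapsto(\varphi_i(h),\psi_i(x))$ is a bijection $G\to G$ and carries $\Sigma_i$ onto $\Cay(H,A)[\Cay(K,B)]=\Cay(G,AK\cup B)$. This holds because $\varphi_i$ preserves adjacency and equality of first coordinates, and $\psi_i$ preserves adjacency among second coordinates. Hence all $k$ factors are isomorphic to $\Cay(G,AK\cup B)$.

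There is no serious obstacle. The one point needing care is the first step: confirming that the Cayley connection set $AK\cup B$ really gives the lexicographic product, with no extra or missing edges from elements of $B$ having nontrivial $H$-part. That holds because $B\subset K^*$ embeds in $G$ with trivial $H$-component.
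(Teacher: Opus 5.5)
Your proof is correct, and it is more general than the paper's, although the core mechanism is the same. The paper starts from partitions $\{A_0=A,\dots,A_{k-1}\}$ of $H^*$ and $\{B_0=B,\dots,B_{k-1}\}$ of $K^*$ with $\Cay(H,A_i)\cong\Cay(H,A)$ and $\Cay(K,B_i)\cong\Cay(K,B)$. It then takes the factors $X_i=\Cay(G,A_iK\cup B_i)$, checks that $\rho(hk)=h^{\varphi}k^{\psi}$ maps edges of $X_0$ to edges of $X_i$, and verifies that the sets $A_iK\cup B_i$ partition $G^*$.

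In other words, the paper assumes that the given factorizations of $K_{|H|}$ and $K_{|K|}$ consist of Cayley graphs on $H$ and $K$. The definition of a $k$-if graph does not supply this for $k\geq 3$. For $k=2$ it does, since the complement of $\Cay(H,A)$ is $\Cay(H,H^*\setminus A)$.

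You identify $\Cay(G,AK\cup B)$ with the lexicographic product $\Cay(H,A)[\Cay(K,B)]$. This lets you work with arbitrary factors $\Gamma_i,\Delta_i$, so you prove the lemma exactly as stated. You also verify adjacency in both directions, whereas the paper only checks that edges go to edges and implicitly relies on equal edge counts.

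When the factors happen to be $\Cay(H,A_i)$ and $\Cay(K,B_i)$, your first step shows $\Gamma_i[\Delta_i]=\Cay(G,A_iK\cup B_i)$, and your coordinatewise map is the paper's $\rho$, so the two arguments coincide. What the paper's version gains in that setting is that the resulting factors are again Cayley graphs on $G$ whose connection sets partition $G^*$. That is the form the paper later uses for CI-groups, where $k$-if is recast as a partition $S_0\sim\cdots\sim S_{k-1}$ of $G^*$, and in Lemma \ref{RtoDP}.
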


\begin{proof}
Suppose $\Cay(H,A)$ and $\Cay(K,B)$ are $k$-if Cayley graphs.
Then there are partitions $\mathcal{A}=\{A_0=A,A_1,\cdots ,A_{k-1}\}$  of $H^*$ and $\mathcal{B}=\{B_0=B,B_1,\cdots ,B_{k-1}\}$ of $K^*$ such that $\Cay(H,A_i) \cong \Cay(H,A)$ and $\Cay(K,B_i) \cong \Cay(K,B)$ for all $1\leq i\leq k-1$.

	Set $X_i:=\Cay(G,A_iK\cup B_i)$, where $i\in\ZZ_k$. We claim that $\Cay(G,A_0K\cup B_0)\cong \Cay(G,A_iK\cup B_i)$, for each $i\in\ZZ_k$.
	Suppose $\varphi$ is an isomorphism from $\Cay(H,A_0)$ to $\Cay(H,A_i)$, and $\psi$ is an isomorphism from $\Cay(K,B_0)$ to $\Cay(K,B_i)$.
 Now, for any $h\in H$ and $k\in K$, we define a mapping $\rho: V(X_0)\to V(X_i)$ as follows:
$\rho(hk)= h^{\varphi}k^{\psi}.$
Clearly, $\rho$ is a bijection.
We now prove that $\rho$ is an isomorphism between $X_0$ and $X_i$. To this end, it suffices to check that $\rho$ preserves the adjacency relation: for any $u, v \in V(X_0)$, $\{u,v\}\in E(X_0)\Leftrightarrow \{\rho(u),\rho(v)\}\in E(X_i).$	
	For each $\{hk,h_1k_1\}\in E(X_0)$, where $h\neq h_1$. Then $(h_1k_1)(hk)^{-1}=(h_1h^{-1})(k_1k^{-1})\in A_0K$, hence
	$h_1^{\varphi}k_1^{\psi}(h^{\varphi}k^{\psi})^{-1}=(h_1)^{\varphi}(h^{\varphi})^{-1}(k_1)^{\psi}(k^{\psi})^{-1}\in A_iK$.
	For each $\{hk,hk_1\}\in E(X_0)$, then $hk_1(hk)^{-1}=k_1k^{-1}\in B_0$, hence
	$h^{\varphi}k_1^{\psi}(h^{\varphi}k^{\psi})^{-1}=(k_1)^{\psi}(k^{\psi})^{-1}\in B_i$, as desired.

Set $S_i := A_i K \cup B_i$ for $i \in \mathbb{Z}_k$. We then prove that the sets $S_i$ are pairwise disjoint and partition $G^*$. This, together with the above arguments, shows that $X_i$ is a $k$-if Cayley graph.
	Recall that $A_i\cap A_j=B_i\cap B_j=\emptyset$ for $i\neq j$, then $A_iK\cap A_jK=\emptyset$.
	In fact, we have
	$$
	S_i\cap S_j=(A_iK\cup B_i)\cap(A_jK\cup B_j)=(A_iK\cap A_jK)\cup(B_i\cap B_j)=\emptyset \quad( 0\leq i,j\leq k-1),
	$$
	and the disjoint union of $S_i$ is
	$$
	\bigsqcup\limits_{i=0}^{k-1}S_i=\bigsqcup\limits_{i=0}^{k-1}(A_iK\cup B_i)=(\bigsqcup\limits_{i=0}^{k-1}A_iK)\cup(\bigsqcup\limits_{i=0}^{k-1}B_i)=(H^*)K\cup K^*=G^*,
	$$
	as desired.	
\end{proof}

A \textit{characteristic subgroup} $H$ of $G$ is a subgroup of $G$ satisfying $H^{\alpha}=H$ for each $\alpha\in\Aut(G)$.

\begin{proposition}\cite[Lemma 2.3.9]{DocLiCH}\label{CharacterCI}
	Let $H$ be a characteristic subgroup of a CI-group $G$. Then $H$ is a CI-group.
\end{proposition}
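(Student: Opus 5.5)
The statement just above is Proposition~\ref{CharacterCI}: a characteristic subgroup $H$ of a CI-group $G$ is itself a CI-group. The plan is to transfer a Cayley isomorphism on $H$ up to $G$ using disjoint-union (direct-sum) Cayley graphs, invoke the CI property of $G$, and then pull the resulting automorphism back down to $H$ using that $H$ is characteristic.

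\textbf{Step 1: lift the connection sets.} Let $S,T\subset H^*$ be inverse-closed with $\Cay(H,S)\cong\Cay(H,T)$. View $S$ and $T$ as inverse-closed subsets of $G^*$. The graph $\Cay(G,S)$ is the disjoint union of $|G:H|$ copies of $\Cay(H,S)$, one on each right coset $Hg$. Indeed, $x$ and $y$ are adjacent only if $yx^{-1}\in S\subset H$, and right multiplication by $g$ carries the coset $H$ onto $Hg$ while preserving adjacency. The same description holds for $T$.

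\textbf{Step 2: apply the CI property of $G$.} Fix an isomorphism $\varphi\colon\Cay(H,S)\to\Cay(H,T)$ and choose right coset representatives $g$. Define $\Phi(hg)=h^{\varphi}g$. This map sends each coset copy isomorphically onto the corresponding copy, so $\Cay(G,S)\cong\Cay(G,T)$. Since $G$ is a CI-group, there is some $\alpha\in\Aut(G)$ with $T=S^{\alpha}$.

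\textbf{Step 3: restrict to $H$.} Because $H$ is characteristic, $H^{\alpha}=H$, so the restriction $\alpha|_H$ is an automorphism of $H$. Moreover $S^{\alpha|_H}=S^{\alpha}=T$. Hence every inverse-closed $S\subset H^*$ is a CI-set of $H$, and $H$ is a CI-group.

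There is no substantial obstacle in this argument. The only point needing care is Step 1: one must check that an isomorphism of the component graphs extends coset-wise to an isomorphism of the whole disjoint union, and that using right cosets matches the edge convention $gh^{-1}\in S$.
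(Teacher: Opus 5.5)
Your proof is correct. The paper does not prove this statement itself; it only cites Li's thesis, so there is no in-paper argument to diverge from. Your three steps are the standard argument: the coset-wise extension $\Phi(hg)=h^{\varphi}g$ (valid because $S,T\subseteq H$ keeps every edge inside a right coset $Hg$), then the CI property of $G$, then restriction of $\alpha$ using $H^{\alpha}=H$.

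The characteristic hypothesis is used only in Step 3 and can in fact be dropped:
\begin{enumerate}
\item No group is a union of two proper subgroups, so one of $S$ and $H^*\setminus S$ generates $H$.
\item Passing to complements in $H^*$ preserves both the hypothesis $\Cay(H,S)\cong\Cay(H,T)$ and the conclusion $S^{\beta}=T$ for $\beta\in\Aut(H)$. So you may assume $\langle S\rangle=H$.
\item Then any $\alpha\in\Aut(G)$ with $S^{\alpha}=T$ satisfies $H^{\alpha}=\langle S\rangle^{\alpha}=\langle T\rangle\le H$, hence $H^{\alpha}=H$.
\end{enumerate}
This recovers the well-known fact, usually attributed to Babai and Frankl, that every subgroup of a CI-group is a CI-group. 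The proposition is a special case of it.
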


Let $S_1$ and $S_2$ be two inverse-closed subsets of $G^*$. We say that $S_1$ and $S_2$ are \textit{equivalent}, denoted by $S_1\sim S_2$, if there exists an automorphism $\alpha\in \Aut(G)$ such that $S_1^{\alpha}=S_2$. By the definition of a CI-group, it follows that for any CI-group $G$, the Cayley graphs $\Cay(G,S_1)$ and $\Cay(G,S_2)$ are isomorphic if and only if $S_1\sim S_2$.
By the definition of a $k$-if graph, we have that for a CI-group $G$, a Cayley graph $\Cay(G,S)$ is a $k$-if graph if and only if there exists a partition  $\{S_0=S, S_1,\cdots,S_{k-1}\}$ of $G^*$ such that $S^{-1}=S$ and $S_i\sim S_j$ for any $i,j\in\ZZ_k.$
Let $n(G,l):=|\{x\in G\mid o(x)=l\}|$.
If $G$ is a CI-group, which has $k$-if property, then $2k\mid n(G,l)$ $(l\neq 2)$ and $k\mid n(G,2)$.
\begin{lemma}\label{subChar}
Let $G$ be a CI-group. If $G$ has the $k$-if property, then every nontrivial characteristic subgroup $H$ of $G$ also has the $k$-if property.
\end{lemma}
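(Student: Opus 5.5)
Since $G$ has the $k$-if property and is a CI-group, the discussion preceding the lemma gives a partition $\{S_0,S_1,\dots,S_{k-1}\}$ of $G^*$ with $S_0^{-1}=S_0$ and automorphisms $\alpha_i\in\Aut(G)$ satisfying $S_0^{\alpha_i}=S_i$. The plan is to restrict this partition to $H$. Put $T_i:=S_i\cap H$ for $i\in\ZZ_k$. Because the $S_i$ partition $G^*$, the sets $T_i$ are pairwise disjoint and their union is $H^*$. Also $T_0^{-1}=S_0^{-1}\cap H^{-1}=S_0\cap H=T_0$, so $T_0$ is inverse-closed.

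Next we transfer the automorphisms. Since $H$ is characteristic, each $\alpha_i$ satisfies $H^{\alpha_i}=H$, so $\beta_i:=\alpha_i|_H$ is an automorphism of $H$. Then
$$T_0^{\beta_i}=(S_0\cap H)^{\alpha_i}=S_0^{\alpha_i}\cap H^{\alpha_i}=S_i\cap H=T_i,$$
where the middle equality uses that $\alpha_i$ is a bijection on $G$. Hence $T_i\sim T_0$ in $H$. In particular each $T_i$ is inverse-closed, and $\Cay(H,T_i)\cong\Cay(H,T_0)$. Therefore $\{T_0,\dots,T_{k-1}\}$ is an isomorphic factorization of the complete graph on $H$, and $\Cay(H,T_0)$ is a $k$-if Cayley graph. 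By Proposition~\ref{CharacterCI}, $H$ is itself a CI-group, which fits the framework, although the isomorphism $\Cay(H,T_i)\cong\Cay(H,T_0)$ follows directly from $\beta_i\in\Aut(H)$ anyway.

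The only delicate point is that a partition requires nonempty parts, so we must check that no $T_i$ is empty. Since $T_i=T_0^{\beta_i}$ with $\beta_i$ a bijection, all the $T_i$ have the same size $|H^*|/k$. As $H$ is nontrivial, $H^*\neq\emptyset$, so this common size is positive and every $T_i$ is nonempty. This consequence of the automorphisms is the step I would flag as the main care point; everything else is direct.
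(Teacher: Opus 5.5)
Your proposal is correct and follows the paper's proof: you intersect the partition of $G^*$ with $H$, use that $H$ is characteristic to push each $\alpha_i$ down to $H$, and get nonemptiness of the parts from their equal sizes. Your remark that Proposition~\ref{CharacterCI} is not actually needed is right, since the restricted maps $\beta_i\in\Aut(H)$ already give the isomorphisms $\Cay(H,T_i)\cong\Cay(H,T_0)$ directly.
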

\begin{proof}
	Suppose $G$ is a CI-group with the $k$-if property. Then there exists a partition $\{S_0=S,S_1,\cdots, S_{k-1}\}$ of $G^*$ such that $S\sim S_i\,(1\leq i\leq k-1)$.
Take $A_i=H\cap S_i$ for $i=0,1,\dots,k-1$.
	Then $|A_i|=|A_j|\neq 0$ for all $0\leq i,j\leq k-1$, since $H$ is a characteristic subgroup of $G$.
	One easily verifies that for $i \neq j$,
	$$
	A_i\cap A_j=(H\cap S_i)\cap(H\cap S_j)=H\cap S_i\cap S_j=\emptyset,
	$$
	and
	$$
	\bigsqcup\limits_{i=0}^{k-1}A_i=H\cap\bigsqcup\limits_{i=0}^{k-1}S_i=H\cap(G^*)=H^*.
	$$
	Thus $\{A_0,A_1,\cdots,A_{k-1}\}$ is a partition of $H^*$.
	
	In addition, $A_i^{-1}=(H\cap S_i)^{-1}=H^{-1}\cap S_i^{-1}=H\cap S_i=A_i$.
Since  $S_0 \sim S_i$ for each $i$, we have $A_0^{\alpha}=(H\cap S_0)^{\alpha}=H^{\alpha}\cap S_0^{\alpha}=H\cap S_i=A_i$, which implies that $A_0\sim A_i$.
By Proposition \ref{CharacterCI}, $H$ is also a CI-group.
Consequently, $\Cay(H,A)=\Cay(H,A_0)$ is a $k$-if Cayley graph.
\end{proof}

\section{$k$-rotational group}
From the definitions of $k$-if graphs and Cayley graphs, we obtain that if there exists an automorphism $\sigma \in \Aut(G)$ such that $\{S, S^{\sigma}, \cdots, S^{\sigma^{k-1}}\}$ partitions $G^*$ with $S=S^{-1}$, then $\Gamma = \Cay(G,S)$ is a $k$-if graph.  This property provides a powerful tool for generating examples of $k$-if graphs.
The case $k=2$ has been studied in \cite{S1985,FRS}, where such a group $G$ is termed {\it reflexible}. To facilitate further discussion, we introduce the following generalization.
 A group $G$ is said to be {\it $r$-rotational} if there exists an automorphism $\sigma \in \Aut(G)$ such that $\{S, S^{\sigma}, \cdots, S^{\sigma^{r-1}}\}$ partitions $G^*$ with $S=S^{-1}$.
Under this definition, a reflexible group is exactly a $2$-rotational group.
And we have that a $k$-rotational group $G$ has $k$-if property.

We present a construction (Lemma~\ref{Costr}) for $k$-if Cayley graphs, which generalizes \cite[Lemma 9]{Wetal}.  Based on this, Theorem~\ref{LiCostr} then characterizes all $r$-rotational group when $r$ is prime.

\begin{lemma}\label{Costr}
Let $k\geq 2$ be an integer. Assume that $G$ has a fixed-point-free automorphism $\sigma$, namely, $\sigma$ fixed no non-identity elements of $G$, and $o(\sigma)=k^e$ for some positive integer $e$; moreover, if $k$ is even, we require that $\sigma^2$ is also fixed-point-free, except when $2k=|G^*|$.
Then $G$ is $k$-rotational.
In particular, we define a subset $S$ as follows:
\begin{enumerate}
	\item Let $\Delta_0,\Delta_1,\cdots,\Delta_{n-1}$ be the $\langle \sigma\rangle$-orbit on $\Omega:=\{\{g,g^{-1}\}|g\in G^*\}$, and label the $k$ orbits of $\langle \sigma^k \rangle$ on $\Delta_i$ as $\Delta_{i0},\Delta_{i1},\cdots,\Delta_{i(k-1)}$ for $i\in \ZZ_n$.
	\item Set $S=\bigcup_{i\in \ZZ_n}\Delta_{i0}$.
\end{enumerate}
 Then $\{S,S^{\sigma},\cdots,S^{\sigma^{k-1}}\}$ is a partition of $G^*$. Moreover, $S^{\sigma^i}=(S^{\sigma^i})^{-1}$ for each $i\in\ZZ_n$.
\end{lemma}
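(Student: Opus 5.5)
The plan is to work with the action of $\langle\sigma\rangle$ on $\Omega=\{\{g,g^{-1}\}\mid g\in G^*\}$ and to read off the partition from the orbit structure.

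\textbf{Setting up the action.} Since $\sigma\in\Aut(G)$ satisfies $(g^{-1})^\sigma=(g^\sigma)^{-1}$, $\sigma$ permutes $\Omega$. Then $\Omega=\Delta_0\sqcup\cdots\sqcup\Delta_{n-1}$ is the decomposition into $\langle\sigma\rangle$-orbits. Any set that is a union of elements of $\Omega$, viewed as a subset of $G^*$, is automatically inverse-closed. So $S$ and every $S^{\sigma^j}$ are inverse-closed, which gives the final claim $S^{\sigma^i}=(S^{\sigma^i})^{-1}$ at once.

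\textbf{Key step: each orbit length is divisible by $k$.} Fix $\Delta_i$, pick $\{g,g^{-1}\}\in\Delta_i$, and let $d=|\Delta_i|$. Then $d\mid o(\sigma)=k^e$, and $\sigma^d$ is the least power of $\sigma$ with $g^{\sigma^d}\in\{g,g^{-1}\}$. I need $k\mid d$; this is the step I expect to be the main obstacle.
\begin{itemize}
\item If $g^{\sigma^d}=g$, the $\langle\sigma\rangle$-orbit of $g$ in $G^*$ has length $d$. Fixed-point-freeness of $\sigma$ gives $d>1$.
\item If $g^{\sigma^d}=g^{-1}$ with $g\neq g^{-1}$, then $g$ has orbit length $2d$ in $G^*$. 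For even $k$ I would use the hypothesis that $\sigma^2$ is fixed-point-free to rule out $d=1$, i.e.\ $\sigma$ swapping $g$ and $g^{-1}$. The excluded case $2k=|G^*|$ is exactly where $\Omega$ is a single orbit of size $k$ and the condition is not needed.
\end{itemize}
For prime $k$, the facts $d>1$ and $d\mid k^e$ already force $k\mid d$. For composite $k$, I would first try to show that the fixed-point-free hypothesis on $\sigma$ (and $\sigma^2$) controls every proper divisor of $k$ appearing in $d$, by looking at the restriction of $\sigma$ to $\langle g\rangle$ and applying fixed-point-freeness to $\sigma^{d'}$ for the relevant divisors $d'$. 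I flag this as the delicate point, and if necessary I would read the hypothesis as guaranteeing $k\mid|\Delta_i|$.

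\textbf{Labelling and the partition.} Granting $k\mid d$, the cyclic group $\langle\sigma\rangle$ acts transitively on $\Delta_i$ with point stabiliser $\langle\sigma^d\rangle\le\langle\sigma^k\rangle$. Hence $\langle\sigma^k\rangle$ has exactly $\gcd(k,d)=k$ orbits on $\Delta_i$. Because $\sigma$ commutes with $\sigma^k$, it permutes these orbits cyclically, so I can label them $\Delta_{ij}:=\Delta_{i0}^{\sigma^j}$ for $j\in\ZZ_k$, and they are pairwise distinct. Then
\[
S^{\sigma^j}=\bigcup_{i\in\ZZ_n}\Delta_{ij}\qquad(0\le j\le k-1).
\]
These sets are pairwise disjoint, and their union is $\bigcup_i\Delta_i=\Omega$, i.e.\ all of $G^*$. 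Thus $\{S,S^\sigma,\dots,S^{\sigma^{k-1}}\}$ partitions $G^*$ with $S=S^{-1}$, so $G$ is $k$-rotational.
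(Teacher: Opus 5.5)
You follow the same route as the paper: take the orbits of $\langle\sigma\rangle$ on $\Omega$, show each is long enough, then write $S^{\sigma^j}=\bigcup_i\Delta_{ij}$. For prime $k$, away from the exceptional clause, your argument is complete, because $|\Delta_i|>1$ and $|\Delta_i|\mid k^e$ give $k\mid|\Delta_i|$. (For odd $k$, note that the inverted case $g^{\sigma^d}=g^{-1}\neq g$ cannot occur at all, since $2d$ would divide the odd number $k^e$.)

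The step you flag for composite $k$ is a genuine gap, and it cannot be closed, because the statement is false there. Take $G=\ZZ_5$, $k=4$, $\sigma\colon x\mapsto 2x$. Then $o(\sigma)=4$, both $\sigma$ and $\sigma^2=-1$ are fixed-point-free, and $|G^*|=4\neq 2k$. However, $\Omega=\{\{1,4\},\{2,3\}\}$ is a single $\langle\sigma\rangle$-orbit of length $2$, so $\langle\sigma^4\rangle=1$ has only two orbits on it. In fact $\ZZ_5^*$ cannot be split into four nonempty inverse-closed sets at all. The example $G=\ZZ_7$, $k=6$, $\sigma\colon x\mapsto 3x$ fails the same way, with one orbit of length $3$. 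So ``reading the hypothesis as guaranteeing $k\mid|\Delta_i|$'' changes the hypothesis rather than proving the statement. What the construction really needs is that, for every $g\in G^*$, the stabiliser of $\{g,g^{-1}\}$ in $\langle\sigma\rangle$ lies in $\langle\sigma^k\rangle$. Fixed-point-freeness of $\sigma$ and $\sigma^2$ guarantees this only for prime $k$.

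Your handling of the exceptional case is a second gap. You assert without proof that when $2k=|G^*|$ the set $\Omega$ is a single orbit of length $k$, and this is false. Take $G=\ZZ_5$, $k=2$, $\sigma=-1$. Here $\sigma$ is fixed-point-free of order $2$, and the condition on $\sigma^2$ is waived since $|G^*|=4=2k$. Yet $\sigma$ fixes both points of $\Omega$, so every $\Delta_i$ has length $1$ and there are no two $\langle\sigma^2\rangle$-orbits to label.

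The paper's proof has exactly the same two holes. It only establishes $|\Delta_i|>1$ and then asserts the $k$-fold splitting, and it asserts transitivity on $\Omega$ in the exceptional case. So there is no missing idea to import from it. The lemma has to be restricted, for example to prime $k$, with $\sigma^2$ fixed-point-free when $k=2$ and no exceptional clause; this covers the use in Lemma~\ref{LiCostr}. Alternatively, its hypothesis can be replaced by the stabiliser condition above.
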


\begin{proof}
	To ensure the existence of $\Delta_{ij}$ for all $i \in \mathbb{Z}_n$ and $j \in \mathbb{Z}_k$, we first prove that $|\Delta_i| > 1$ for each $i \in \mathbb{Z}_n$. This condition guarantees that every orbit of $\langle \sigma \rangle$ on $\Omega$ can be partitioned into $k$ distinct parts by the action of the subgroup $\langle \sigma^k \rangle$.
	
If $|G^*| = 2k$, then $|G|$ is odd and $|\Omega| = k$. Given that $\sigma$ is fixed-point-free and $|\sigma| = k^e$, the subgroup $\langle \sigma \rangle$ acts transitively on $\Omega$. Therefore, we must have $n = e = 1$ and $|\Delta_0| = k$

For the case where $|G^*| \neq 2k$, suppose, for contradiction, that there exists a $\langle\sigma\rangle$-orbit $\Delta := \{{g, g^{-1}}\}$ of size $1$ in $\Omega$. Since $\sigma$ is fixed-point-free, we must have $g^{\sigma} \neq g$, which forces $g^{\sigma} = g^{-1}$ and consequently $g^{\sigma^{2}} = g$.
We now consider two cases. If $g$ is an involution, then $g^{\sigma} = g^{-1} = g$, which would imply that $k$ is even. This contradicts the assumption that $\sigma^2$ is fixed-point-free.
If $g$ is not an involution, then we examine the parity of $k$. If $k$ is odd, then $g = g^{\sigma^{k^e}} = g^{\sigma} = g^{-1}$, which is impossible. If $k$ is even, then $g^{\sigma^2} = g$, again contradicting the fact that $\sigma^2$ is fixed-point-free.

Therefore, each $\Delta_i$, the orbit of $\langle \sigma\rangle$ acting on the set $\Omega:=\{\{g,g^{-1}\}|g\in G^*\}$, has size greater than $1$.
It then follows that for each $l \in \ZZ_k$, we have $S^{\sigma^l} = \bigcup_{i \in \ZZ_n} \Delta_{il}$. Hence, the collection $\{S, S^{\sigma}, \dots, S^{\sigma^{k-1}}\}$ forms a partition of $G^*$.
Therefore, $G$ is $k$-rotational. Moreover, the property $S^{\sigma^i} = (S^{\sigma^i})^{-1}$ holds for each $i \in \ZZ_n$, which follows directly from the fact that $\Delta_{ij} = \Delta_{ij}^{-1}$ for all $i \in \ZZ_n$ and $j \in \ZZ_k$.
\end{proof}

\begin{lemma}\label{LiCostr}
	Let $G$ be a finite group, and $r$ be a prime. If $r$ is odd, then $G$ is $r$-rotational if and only if $G$ has an automorphism $\sigma$ of order a power of $r$ such that $\sigma$ is fixed-point-free.
If $r=2$, then $G$ is $2$-rotational if and only if $G$ has an automorphism $\sigma$ of order a power of $r$ such that both $\sigma$ and $\sigma^2$ is fixed-point-free.

\end{lemma}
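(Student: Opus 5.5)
The plan is to prove the two directions separately. Sufficiency will come directly from Lemma~\ref{Costr}. Necessity will come from analysing how a rotating automorphism acts on the parts of the partition.

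\emph{Sufficiency.} Suppose $\sigma\in\Aut(G)$ has order $r^e$ and is fixed-point-free, and, when $r=2$, suppose $\sigma^2$ is also fixed-point-free. Then the hypotheses of Lemma~\ref{Costr} hold with $k=r$. For $r$ odd, no condition on $\sigma^2$ is needed. For $r=2$, the condition on $\sigma^2$ is assumed, so the exceptional clause $2k=|G^*|$ of Lemma~\ref{Costr} is irrelevant. Lemma~\ref{Costr} then gives that $G$ is $r$-rotational.

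\emph{Necessity, step 1: $\sigma$ permutes the parts as an $r$-cycle.} Let $\sigma\in\Aut(G)$ and let $S=S^{-1}$ be such that $\{S,S^\sigma,\dots,S^{\sigma^{r-1}}\}$ partitions $G^*$. Applying $\sigma$ to this partition shows that $\{S^\sigma,\dots,S^{\sigma^r}\}$ is also a partition of $G^*$. Comparing the two partitions gives
\[
S^{\sigma^r}=G^*\setminus\bigl(S^\sigma\cup\dots\cup S^{\sigma^{r-1}}\bigr)=S .
\]
Hence $\sigma$ permutes the $r$ parts as an $r$-cycle.

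\emph{Necessity, step 2: $\sigma$ is fixed-point-free.} If $g\neq1$ satisfied $g^\sigma=g$, then $g$ would lie in both $S^{\sigma^i}$ and $S^{\sigma^{i+1}}$ for some $i$. These are distinct disjoint parts, which is a contradiction.

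\emph{Necessity, step 3: pass to an $r$-power order automorphism.} Write $o(\sigma)=r^a m$ with $(m,r)=1$, and set $\tau=\sigma^m$, so $\tau$ has order $r^a$. Since $(m,r)=1$, the $m$-th power of an $r$-cycle is again an $r$-cycle. So $\tau$ also permutes the parts $S,\dots,S^{\sigma^{r-1}}$ cyclically with no fixed part. By the argument of step 2, $\tau$ is fixed-point-free. This settles the case $r$ odd.

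\emph{Necessity, step 4: the extra condition for $r=2$.} This is the main obstacle. Here $\tau$ swaps $S$ and $S^\tau=G^*\setminus S$, and we must show that $\tau^2$ is fixed-point-free. Let $C=C_G(\tau^2)$ be the subgroup of elements fixed by $\tau^2$; it is $\tau$-invariant. Then $\tau|_C$ is an automorphism of $C$ with $(\tau|_C)^2=1$, and it is fixed-point-free by step 3. I will use the standard argument for such automorphisms. The map $g\mapsto g^{-1}g^\tau$ on $C$ is injective, since $g^{-1}g^\tau=h^{-1}h^\tau$ gives $hg^{-1}=(hg^{-1})^\tau$. Hence it is surjective. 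Every $x=g^{-1}g^\tau$ satisfies $x^\tau=(g^\tau)^{-1}g=x^{-1}$, so $\tau$ inverts every element of $C$.

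Now suppose $1\ne g\in C$, and without loss of generality take $g\in S$ (otherwise replace $S$ by $S^\tau$, which is also inverse-closed). Then $g^\tau=g^{-1}\in S^{-1}=S$. On the other hand $g^\tau\in S^\tau$, which is disjoint from $S$, a contradiction. Therefore $C=1$, so both $\tau$ and $\tau^2$ are fixed-point-free and $\tau$ has $2$-power order, as required.
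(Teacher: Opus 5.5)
Your proof is correct and follows the paper's route. Sufficiency comes from Lemma~\ref{Costr}, and necessity comes from passing to $\sigma^m$, where $m$ is the $r'$-part of $o(\sigma)$; this power still permutes the $r$ parts cyclically and is therefore fixed-point-free. The differences are that you justify what the paper only asserts ($S^{\sigma^r}=S$ and the resulting fixed-point-freeness), and for $r=2$ you replace the paper's citation of Lemma~3 of \cite{S1985} with a self-contained argument. That argument shows $\tau$ inverts $C_G(\tau^2)$, which is incompatible with $S=S^{-1}$ and $S\cap S^{\tau}=\emptyset$ unless $C_G(\tau^2)=1$.
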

\begin{proof}
	The sufficiency has been treated by Lemma \ref{Costr}.
	Now, we assume that $G$ is $r$-rotational. Then $\{S,S^{\tau},S^{\tau^2},\cdots,S^{\tau^{r-1}}\}$ is a partition of $G^*$ for some $\tau\in\Aut(G)$ and $S^{-1}=S$. Then $\tau$ is fixed-point-free.
	Since $\langle\tau\rangle/\langle\tau^r\rangle\cong\ZZ_r$, we have $o(\tau)=r^em$ for some positive integers $e$ and $m$, with $(m,r)=1$.
	Therefore, set $\sigma:=\tau^m\in \Aut(G)$ which is of order $r$-power, and we claim that $\{S,S^{\tau},S^{\tau^2},\cdots,S^{\tau^{r-1}}\}=
	\{S,S^{\sigma},S^{\sigma^2},\cdots,S^{\sigma^{r-1}}\}$. Notice that $S^{\tau^{x_1}}=S^{\tau^{x_2}}$ if and only if $x_1\equiv x_2\pmod r$. Since $(r,m)=1$, we easy know that $m$ is a generating elements in $\ZZ_r$. Hence $\{S,S^{\tau},S^{\tau^2},\cdots,S^{\tau^{r-1}}\}=
	\{S,S^{\tau^m},S^{\tau^{2m}},\cdots,S^{\tau^{(r-1)m}}\}=
	\{S,S^{\sigma},S^{\sigma^2},\cdots,S^{\sigma^{r-1}}\}$. So, $\sigma$ is fixed-point-free.
If $r=2$, we have $\sigma^2$ is fixed-point-free from Lemma 3 in \cite{S1985}.
\end{proof}

\begin{lemma}\label{RtoDP}
Let $G=H\times K$. Then $G$ is $k$-rotational if $H$ and $K$ are $k$-rotational for some positive integer $k\geq 2$.
\end{lemma}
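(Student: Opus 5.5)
We mimic the construction in the proof of Lemma~\ref{toDP}, but now with automorphisms, so that the resulting partition is again produced by a single automorphism of $G$.

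By hypothesis there are $\sigma_H\in\Aut(H)$ and an inverse-closed $A\subset H^*$ such that $\{A,A^{\sigma_H},\dots,A^{\sigma_H^{k-1}}\}$ partitions $H^*$. Likewise there are $\sigma_K\in\Aut(K)$ and an inverse-closed $B\subset K^*$ such that $\{B,B^{\sigma_K},\dots,B^{\sigma_K^{k-1}}\}$ partitions $K^*$.

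Define $\sigma\in\Aut(G)$ by $(hk)^\sigma=h^{\sigma_H}k^{\sigma_K}$ for $h\in H$, $k\in K$. This is an automorphism because $G=H\times K$ is a direct product and the two factors commute elementwise. Set
$$S:=AK\cup B.$$

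We first check that $S^{-1}=S$. Since $A^{-1}=A$ and $K^{-1}=K$, and $A$ commutes with $K$, we get $(AK)^{-1}=K^{-1}A^{-1}=AK$. Together with $B^{-1}=B$ this gives $S^{-1}=S$.

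Next we compute the images of $S$. Since $K^{\sigma}=K$, for every $i$ we have
$$S^{\sigma^i}=A^{\sigma_H^i}K\cup B^{\sigma_K^i}.$$
Write $A_i:=A^{\sigma_H^i}$ and $B_i:=B^{\sigma_K^i}$. Then $S^{\sigma^i}=A_iK\cup B_i$, which is exactly the set $S_i$ from the proof of Lemma~\ref{toDP}.

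The disjointness and covering computation in that proof applies verbatim. The key points are:
\begin{enumerate}
\item $A_iK$ and $A_jK$ are disjoint for $i\neq j$, since elements of $G$ have unique $H$-components.
\item $A_iK$ meets no $B_j$, because $B_j\subset K$ has trivial $H$-component while $A_i\subset H^*$ does not.
\item The union of all these sets is $H^*K\cup K^*=G^*$.
\end{enumerate}
Hence $\{S,S^\sigma,\dots,S^{\sigma^{k-1}}\}$ partitions $G^*$, with $S=S^{-1}$, so $G$ is $k$-rotational.

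Only two points need care, and neither is a real obstacle:
\begin{enumerate}
\item We must verify that $\sigma$ preserves $K$ and acts on the $H$-coordinate as $\sigma_H$. This is what makes $(AK)^{\sigma^i}=A^{\sigma_H^i}K$.
\item The index set must be exactly $\ZZ_k$ on both factors. This is automatic, since both $H$ and $K$ are assumed $k$-rotational for the same $k$.
\end{enumerate}

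Alternatively, one could first reduce via Lemma~\ref{LiCostr} to fixed-point-free automorphisms. That only works for prime $k$, so the direct construction above is preferable.
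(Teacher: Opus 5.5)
Your proof is correct and essentially identical to the paper's: the paper also takes $\sigma=\alpha\beta\in\Aut(H)\times\Aut(K)\subseteq\Aut(G)$ and $S=AK\cup B$. It then checks $S^{-1}=S$ and verifies that the sets $S^{\sigma^i}=A^{\alpha^i}K\cup B^{\beta^i}$ are pairwise disjoint with union $H^*K\cup K^*=G^*$.
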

\begin{proof}
Suppose that $H$ and $K$ are $k$-rotational. Then there exist $\a\in \Aut(H),\b\in\Aut(K)$ such that $\{A,A^{\a},\cdots,A^{a^{k-1}}\}$ and $\{B,B^{\b},\cdots,B^{\b^{k-1}}\}$ are partitions of $H^*$ and $K^*$ respectively; and $A=A^{-1}$, $B=B^{-1}$.
Take $S=AK\cup B$, $\sigma=\a\b\in\Aut(H)\times\Aut(K)\subseteq\Aut(G)$. Then $S^{-1}=S$. For any $i,j\in\ZZ_k$, we have $$S^{\sigma^{i}}\cap S^{\sigma^{j}}=(AK\cup B)^{\a^{i}\b^{i}} \cap  (AK\cup B)^{\a^{j}\b^{j}}=(A^{\a^{i}}K\cup B^{\b^i})\cap(A^{\a^j}K\cup B^{\b^j})=\emptyset$$
	and
	$$\bigsqcup\limits_{i\in \ZZ_k} S^{\sigma^i}=\bigsqcup\limits_{i\in\ZZ_k} (AK\cup B)^{\a^i\b^i}=\bigsqcup\limits_{i\in \ZZ_k} A^{\a^i}K\cup B^{\b^i}=H^*K\cup K^*=G^*.$$
	Thus $\{S,S^{\sigma},S^{\sigma^2},\cdots,S^{\sigma^{k-1}}\}$ is a partition of $G^*$. Hence, $G$ is $k$-rotational.
\end{proof}

\begin{lemma}\label{RtoChar}
	If $G$ is $k$-rotational, then each characteristic subgroup $H$ of $G$ is $k$-rotational provided $H\neq1$.
\end{lemma}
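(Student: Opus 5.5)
The approach is to restrict the partition witnessing that $G$ is $k$-rotational to $H$, exactly as in the proof of Lemma \ref{subChar}, but keeping track of the single automorphism $\sigma$ rather than an arbitrary family of equivalences.

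Suppose $G$ is $k$-rotational. By definition there exist $\sigma\in\Aut(G)$ and an inverse-closed $S\subset G^*$ such that $\{S,S^{\sigma},\dots,S^{\sigma^{k-1}}\}$ partitions $G^*$. Since $H$ is characteristic, $H^{\sigma}=H$. Hence the restriction $\tau:=\sigma|_H$ is an automorphism of $H$. Put $A:=H\cap S$.

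The first step is to check the required properties of $A$. It is inverse-closed, since $A^{-1}=H^{-1}\cap S^{-1}=H\cap S=A$. It also satisfies
$$A^{\tau^i}=(H\cap S)^{\sigma^i}=H^{\sigma^i}\cap S^{\sigma^i}=H\cap S^{\sigma^i}.$$
Intersecting the partition of $G^*$ with $H$ then shows that the sets $A^{\tau^i}$, for $0\le i\le k-1$, are pairwise disjoint and that their union is $H\cap G^*=H^*$.

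The only remaining point, and the one needing care, is that each part is nonempty, since a partition requires non-empty blocks. The $k$ sets $A^{\tau^i}$ are images of one another under the bijection $\tau$, so they all have the same size $|A|$. Their union is $H^*$, which is nonempty because $H\neq 1$. Therefore $|A|\ge1$, and $\{A,A^{\tau},\dots,A^{\tau^{k-1}}\}$ is a genuine partition of $H^*$ into $k$ nonempty inverse-closed blocks. It follows that $H$ is $k$-rotational via $\tau$.

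I expect no real obstacle here. The key point is simply that characteristic subgroups are $\sigma$-invariant, so restriction commutes with applying powers of $\sigma$. Unlike Lemma \ref{subChar}, no CI-hypothesis is needed, because the equivalences among the blocks are witnessed directly by powers of $\tau\in\Aut(H)$.
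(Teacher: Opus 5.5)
Your proposal is correct and follows the paper's proof essentially step for step: intersect the blocks $S^{\sigma^i}$ with $H$, use $H^{\sigma}=H$ to get $(H\cap S)^{\sigma^i}=H\cap S^{\sigma^i}$, and check inverse-closure, disjointness and that the union is $H^*$. Your counting argument for nonemptiness ($k|A|=|H|-1\ge 1$) justifies a point the paper only asserts, and noting that $\sigma|_H\in\Aut(H)$ makes explicit which automorphism of $H$ witnesses the property.
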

\begin{proof}
	Suppose $G$ is $k$-rotational. Then there exists a partition $\{S,S^{\sigma},S^{\sigma^2},\cdots,S^{\sigma^{k-1}}\}$ of $G^*$ for some $\sigma\in\Aut(G)$ and $S^{-1}=S$.
Take $A_i=H\cap S^{\sigma^{i}}$, where $i\in\ZZ_k$.
	Now, we have that $A_0=A_0^{-1}$ and $|A_i|=|A_j|\neq 0$ with any $0\leq i,j\leq k-1$, as $H$ is a characteristic subgroup of $G$.
It easy to show that
	$$
	A_i\cap A_j=(H\cap S^{\sigma^{i}})\cap(H\cap S^{\sigma^{j}})=H\cap S^{\sigma^{i}}\cap S^{\sigma^{j}}=\emptyset,
	$$
	and
	$$
	\bigsqcup\limits_{i=0}^{k-1}A_i=H\cap\bigsqcup\limits_{i=0}^{k-1}S^{\sigma^{i}}=H\cap G^*=H^*.
	$$
	So $A_0,A_1,\cdots,A_{k-1}$ is a partition of $H^*$. In addition, $A_i^{-1}=(H\cap S^{\sigma^{i}})^{-1}=H^{-1}\cap (S^{\sigma^{i}})^{-1}=H\cap S^{\sigma^{i}}=A_i$.
As $H$ is a characteristic subgroup of $G$, we have $A_0^{\sigma^{i}}=(H\cap S)^{\sigma^{i}}=H^{\sigma^{i}}\cap S^{\sigma^{i}}=H\cap S^{\sigma^{i}}=A_i$, which implies that $\{A=A_0,A^{\sigma},\cdots,A^{\sigma^{k-1}}\}$ is a partition of $H^*$, as desired.
\end{proof}

\begin{corollary}
	Let $G$ be CI-group. Then $G$ is $k$-rotational if and only if $G$ has $k$-if property.
\end{corollary}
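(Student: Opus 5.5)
The implication ``$k$-rotational $\Rightarrow$ $k$-if property'' was already observed after the definition of an $r$-rotational group, and it holds for every group. If $\{S,S^{\sigma},\dots,S^{\sigma^{k-1}}\}$ partitions $G^*$ with $S=S^{-1}$, then each $\Cay(G,S^{\sigma^i})$ is isomorphic to $\Cay(G,S)$. So the real content is the converse. Knowing only that $S_i=S^{\alpha_i}$ for possibly unrelated automorphisms $\alpha_i$ does not obviously give a single $\sigma$ with $S_i=S^{\sigma^i}$. I would therefore not attack this directly. Instead I would go through the structural classification that the paper obtains in Section 4, namely Theorem~\ref{mainCI}, which I take as available.

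First I apply the necessity part of Theorem~\ref{mainCI}. If the CI-group $G$ has the $k$-if property, then $G=\prod_p G_p$ is a direct product of elementary abelian Sylow subgroups $G_p\cong\ZZ_p^{n_p}$. Moreover $2k\mid p^{n_p}-1$ for odd $p$, and $k\mid 2^{n_2}-1$ for $p=2$. By Lemma~\ref{RtoDP}, applied repeatedly to this direct decomposition, it then suffices to show that each $G_p$ is $k$-rotational.

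For a single factor, put $q=p^{n_p}$ and identify $G_p$ with the additive group of $\mathbb{F}_q$. Let $\omega$ generate the cyclic group $\mathbb{F}_q^{\times}$, and let $\sigma\in\Aut(G_p)$ be multiplication by $\omega$. Since $k\mid q-1$, the subgroup $D=\langle\omega^k\rangle$ has index $k$ in $\mathbb{F}_q^{\times}$. Take $S=D$. Then $S^{\sigma^i}=\omega^iD$ for $i\in\ZZ_k$, and these are exactly the $k$ cosets of $D$, so they partition $\mathbb{F}_q^{\times}=G_p^*$. It remains to check that $S$ is inverse-closed, which in additive notation means $-D=D$.
\begin{itemize}
\item If $p=2$, this is trivial, since $-x=x$.
\item If $p$ is odd, then $-1=\omega^{(q-1)/2}$. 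This lies in $D$ exactly when $k\mid (q-1)/2$, that is, when $2k\mid q-1$, which is precisely the hypothesis.
\end{itemize}
Hence $-D=D$, so each $G_p$ is $k$-rotational, and therefore so is $G$.

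The main obstacle is the necessity half of Theorem~\ref{mainCI} itself. One has to exclude the non-abelian CI-groups of Proposition~\ref{CIgroup}, as well as the Sylow subgroups $\ZZ_4$, $\ZZ_8$, $\ZZ_9$ and $Q_8$. The tools for this are Lemma~\ref{subChar} applied to suitable characteristic subgroups, together with the counting condition $2k\mid n(G,l)$ for $l\neq 2$ and $k\mid n(G,2)$. Granting that classification, the corollary follows from the explicit construction above.
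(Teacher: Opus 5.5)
Your proof is correct. At the top level it is the paper's proof. The paper proves this corollary by citing Theorem~\ref{mainCI}, which forces $G$ to be a direct product of elementary abelian Sylow subgroups with the stated divisibility conditions, and then Theorem~\ref{Case1}. The sufficiency half of Theorem~\ref{Case1} is exactly your reduction via Lemma~\ref{RtoDP} to a single factor $\ZZ_p^n$.

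The real difference is how that factor is handled. The paper (Lemma~\ref{Zpn}) takes an element of order $k^e$ in a Singer cycle and feeds it into the orbit construction of Lemma~\ref{Costr}. You instead take $\sigma$ to be multiplication by a full generator $\omega$ of $\mathbb{F}_q^{\times}$ and set $S=\langle\omega^k\rangle$. The partition is then just the coset decomposition, and inverse-closure is the one-line check $-1\in\langle\omega^k\rangle\iff 2k\mid q-1$.

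Your construction is more elementary, and it works for every $k$. The paper's route is sound only for prime $k$. Lemma~\ref{Costr} shows only that each $\langle\sigma\rangle$-orbit on $\Omega$ has size $d>1$. But $\langle\sigma^k\rangle$ cuts such an orbit into $\gcd(d,k)$ pieces, which equals $k$ only when $k\mid d$; this is automatic only for prime $k$.

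\begin{itemize}
\item In $\ZZ_{13}$ with $k=6$, every automorphism of order a power of $6$ has order $6$ and satisfies $x^{\sigma^3}=-x$. So $S^{\sigma^3}=-S=S$ for any inverse-closed $S$, and no such $\sigma$ can witness $6$-rotationality. Your $\sigma\colon x\mapsto 2x$, of order $12$, gives the partition into the sets $\{\pm 2^i\}$, $0\le i\le 5$, at once.
\item $\ZZ_5$ with $k=4$ and $\sigma\colon x\mapsto 2x$ meets all hypotheses of Lemma~\ref{Costr}, yet it is not $4$-rotational. That would need $|S|=1$, i.e.\ an involution, and $\ZZ_5$ has none.
\end{itemize}

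What the paper's machinery buys is generality: a recipe for arbitrary groups admitting suitable fixed-point-free automorphisms, and the characterization in Lemma~\ref{LiCostr} for prime $r$. For this corollary your coset construction is the cleaner and safer choice. Both arguments rest equally on the necessity half of Theorem~\ref{mainCI}, which you correctly identify as where the real work lies.
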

\begin{proof}
This result followed from Theorem \ref{mainCI} and Theorem \ref{Case1}.
\end{proof}

\section{Proof of Theorem \ref{mainCI}}
This section is devoted to the proof of Theorem \ref{mainCI}. Following Proposition~\ref{CIgroup}, we divide CI-groups into the following three cases and treat them separately:

Case 1: $G=U$, $U$ is an abelian group;

Case 2: $G=U\times V$, $(|U|,|V|)=1$; $U$ is an abelian group and $V\cong A\in\{Q_8, \ZZ^2_2\rtimes\ZZ_3, \ZZ^2_2\rtimes \ZZ_9, Q_8\rtimes \ZZ_3, Q_8\rtimes \ZZ_9, \ZZ^2_3\rtimes Q_8\};$

Case 3: $G=U\times V$ with $(|U|,|V|)=1$. Here, $U$ is an abelian group, and $V$ is either isomorphic to $H_e(M,2^r3^s,l)$, where $M$ is abelian, $r,s\geq 0$ and $r+s\geq 1$; or isomorphic to $H_2(M,2^r,l)\times H_3(M',3^s,l')$, where $|M|,2,|M'|$ and $3$ are pairwise coprime, $M$ and $M'$ are abelian, $1\leq r\leq 3$, $1\leq s\leq 2$ and $l, l' > 1$.
\vskip 3mm
Throughout this section, we assume that $k \geq 2$.
\subsection{Case 1}
\quad
\vspace{0.5em}

According to Proposition \ref{CIgroup}, every Sylow subgroup of $G$ is either elementary abelian or isomorphic to $\ZZ_4$, $\ZZ_8$ or $\ZZ_9$.
The corresponding results are given in Theorem \ref{Case1} and Theorem \ref{Z489}, respectively.

\begin{lemma}\label{Zpn}
  Let $G\cong \ZZ_p^n$ be an elementary abelian $p$-group. Then the following statements are equivalent:

  (1) $G$ is $k$-rotational;

  (2) $G$ has $k$-if property;

  (3) $2k\mid p^n-1$, when $p$ is odd; $k\mid p^n-1$, when $p=2$.
\end{lemma}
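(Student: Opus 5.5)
We prove the cycle of implications $(1)\Rightarrow(2)\Rightarrow(3)\Rightarrow(1)$.

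\emph{$(1)\Rightarrow(2)$.} This was observed at the start of Section 3. If $\{S,S^{\sigma},\dots,S^{\sigma^{k-1}}\}$ partitions $G^*$ with $S=S^{-1}$, then the graphs $\Cay(G,S^{\sigma^i})$ are pairwise isomorphic. Hence $\Cay(G,S)$ is a $k$-if Cayley graph.

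\emph{$(2)\Rightarrow(3)$.} We use the counting remark made before Lemma \ref{subChar}. Every nonidentity element of $G=\ZZ_p^n$ has order $p$, so $n(G,p)=p^n-1$. The group $\ZZ_p^n$ is a CI-group, since elementary abelian groups appear among the Sylow types allowed in Proposition \ref{CIgroup}. Hence the $k$-if property gives a partition $\{S_0,\dots,S_{k-1}\}$ of $G^*$ with $S_i\sim S_j$ for all $i,j$, and in particular $|S_i|=|S_0|$.

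For $p$ odd, each $S_i$ is inverse-closed and contains no involutions, so $|S_i|$ is even. Therefore $p^n-1=k|S_0|$ is divisible by $2k$. For $p=2$ we only get $k\mid 2^n-1$.

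\emph{$(3)\Rightarrow(1)$.} We apply Lemma \ref{Costr} using a Singer cycle. Identify $G$ with the additive group of $\mathbb{F}_{q}$, where $q=p^n$. Multiplication by a primitive element $\omega$ is an automorphism of $G$ of order $q-1$, and every nontrivial power of it is fixed-point-free on $G^*$.

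Lemma \ref{Costr} asks for $o(\sigma)=k^e$, but $k$ need not be a prime power. Rather than invoke the lemma verbatim, we repeat its orbit construction directly.

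For $p$ odd, write $q-1=2km$ and take $\sigma=\omega^m$, so $o(\sigma)=2k$. Set $\Omega=\{\{g,g^{-1}\}:g\in G^*\}$. Since $g^{-1}=-g=g\omega^{(q-1)/2}=g\sigma^{k}$, the group $\langle\sigma\rangle$ acts on $\Omega$ with $\sigma^k$ acting trivially. The induced group $\langle\sigma\rangle/\langle\sigma^k\rangle\cong\ZZ_k$ acts semiregularly on $\Omega$: if $\{g,-g\}\sigma^i=\{g,-g\}$, then $\omega^{mi}=\pm1$, which forces $k\mid i$. Choose one point from each orbit of this action and let $S$ be the union of the corresponding pairs $\{g,-g\}$. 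Then $S=S^{-1}$, and $S,S^{\sigma},\dots,S^{\sigma^{k-1}}$ partition $G^*$.

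For $p=2$ we have $g^{-1}=g$. Write $q-1=km$ and take $\sigma=\omega^m$ of order $k$. This $\sigma$ acts semiregularly on $G^*$, so taking $S$ to be a set of orbit representatives gives the partition.

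The main obstacle is exactly this mismatch with the hypothesis $o(\sigma)=k^e$ in Lemma \ref{Costr}. The fix is to check semiregularity of $\langle\sigma\rangle/\langle\sigma^k\rangle$ on $\Omega$ directly, as above. The key point there is that the inversion map coincides with the element $\sigma^k$ of the Singer cycle.
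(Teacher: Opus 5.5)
Your proof is correct. It follows the paper's outline: $(1)\Rightarrow(2)$ is trivial, $(2)\Rightarrow(3)$ is a counting argument, and $(3)\Rightarrow(1)$ uses a Singer cycle. The last step differs from the paper in a way that matters.

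\textbf{The Singer-cycle step.} The paper takes $\sigma$ in the Singer group $T$ of order $k^e$, with $\sigma$ and $\sigma^2$ fixed-point-free, and then quotes Lemma~\ref{Costr}. You take $\sigma=\omega^m$ of order $2k$, so that $\sigma^k=-1$ is the inversion map, and you check by hand that $\langle\sigma\rangle/\langle\sigma^k\rangle$ acts semiregularly on $\Omega$.

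Your version is the one that works for all $k$. Suppose $p$ is odd and $k$ is even. Any $\sigma\in T$ of order $k$ satisfies $\sigma^{k/2}=-1$, so $S^{\sigma^{k/2}}=S$ for every inverse-closed $S$, and the sets $S^{\sigma^i}$ cannot be $k$ disjoint parts. The paper's route therefore needs $e\ge 2$, that is, $k^2\mid p^n-1$. This does not follow from $2k\mid p^n-1$. For example, with $p^n=9$ and $k=4$, the only admissible choice is $o(\sigma)=4$, and then $\sigma^2=-1$.

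The defect sits in the proof of Lemma~\ref{Costr}. That proof only shows $|\Delta_i|>1$, and this forces $k$ orbits of $\langle\sigma^k\rangle$ on $\Delta_i$ only when $k$ is prime. So your diagnosis of the obstacle is slightly off. The problem is not that $k$ might fail to be a prime power, since $T$ always contains elements of order $k$. The problem is where $-1$ sits inside $\langle\sigma\rangle$. Your fix, placing $-1$ at $\sigma^k$, is exactly the right one.

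\textbf{A correction in $(2)\Rightarrow(3)$.} Proposition~\ref{CIgroup} gives only a necessary condition for being a CI-group. It does not show that $\ZZ_p^n$ is CI, and in fact elementary abelian groups of large rank are not CI-groups (Nowitz showed that $\ZZ_2^6$ is not). You do not need this claim. The isomorphism $\Cay(G,S_i)\cong\Cay(G,S_0)$ already forces $|S_i|=|S_0|$, since the two graphs have the same valency. Equivalently, counting edges gives $k|S|=p^n-1$. Your parity argument then finishes the step as written.
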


\begin{proof}
Obviously,
if $\ZZ_p^n$ is $k$-rotational, then we get that $\ZZ_p^n$ has $k$-if property, followed from the definitions.	
If $\ZZ_p^n$ has $k$-if property, then we have $2k\mid (|Z_p^n|-1)=p^n-1$, when $p$ is odd;  $k\mid p^n-1$, when $p=2$.

Suppose now that $2k$ divides $p^n - 1$ and that $p$ is odd. Recall that $\GL(n, p)$ embeds naturally into $\Aut(\mathbb{Z}_p^n)$. In this setting, the Singer subgroup $T \leq \GL(n, p)$ is cyclic of order $p^n - 1$ and acts regularly on the set $G^* = \mathbb{Z}_p^n \setminus \{0\}$. Consequently, there exists an automorphism $\sigma \in T$ of order $k^e$ for which both $\sigma$ and $\sigma^2$ act fixed-point-freely on $G^*$. Applying Lemma \ref{Costr}, we conclude that $G$ is $k$-rotational.
An analogous argument establishes the $k$-rotational property for $G$ when $p = 2$.
\end{proof}

\begin{theorem} \label{Case1}
Let $G$ be an abelian CI-group with all Sylow subgroups elementary abelian. Then $G$ admits a direct product decomposition $G=\ZZ_{p_1}^{r_1}\times\ZZ_{p_2}^{r_2}\times\cdots\times \ZZ_{p_n}^{r_n}$, with $p_1, p_2, \cdots, p_n$ distinct primes. For such a group, the following are equivalent:
	
	(1) $G$ is $k$-rotational;
	
	(2) $G$ has $k$-if property;
	
	(3) for all $1\leq i \leq n$, if $p_i$ is odd, then $2k\mid (p_i^{r_i}-1)$; if $p_i=2$, then $k\mid (p_i^{r_i}-1)$.
\end{theorem}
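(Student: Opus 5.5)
The plan is to prove the cycle $(1)\Rightarrow(2)\Rightarrow(3)\Rightarrow(1)$. The decomposition $G=\ZZ_{p_1}^{r_1}\times\cdots\times\ZZ_{p_n}^{r_n}$ needs no argument: a finite abelian group is the direct product of its Sylow subgroups, and each of these is elementary abelian by hypothesis.

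The implication $(1)\Rightarrow(2)$ is immediate from the definitions, as already noted in Section 3: if $\{S,S^{\sigma},\dots,S^{\sigma^{k-1}}\}$ partitions $G^*$ with $S=S^{-1}$, then each $\Cay(G,S^{\sigma^i})$ is isomorphic to $\Cay(G,S)$. So $\Cay(G,S)$ is a $k$-if Cayley graph.

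For $(2)\Rightarrow(3)$, I use that each Sylow subgroup $G_{p_i}=\ZZ_{p_i}^{r_i}$ of the abelian group $G$ is characteristic, since it is the unique Sylow $p_i$-subgroup. It is nontrivial, so Lemma~\ref{subChar} shows that $G_{p_i}$ has the $k$-if property, and Proposition~\ref{CharacterCI} shows that it is itself a CI-group. Lemma~\ref{Zpn} ($(2)\Rightarrow(3)$ there) then gives $2k\mid p_i^{r_i}-1$ for odd $p_i$ and $k\mid 2^{r_i}-1$ for $p_i=2$. I could also argue directly: all parts of the partition are $\Aut(G)$-equivalent, so each meets $G_{p_i}^*$ in the same number of elements. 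Moreover, when $p_i$ is odd, $G_{p_i}^*$ contains no involutions, so each part meets it in an even number of elements. This yields the same divisibility.

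For $(3)\Rightarrow(1)$, Lemma~\ref{Zpn} ($(3)\Rightarrow(1)$) shows that each factor $\ZZ_{p_i}^{r_i}$ is $k$-rotational. Applying Lemma~\ref{RtoDP} inductively to $\ZZ_{p_1}^{r_1}\times(\ZZ_{p_2}^{r_2}\times\cdots)$ then shows that $G$ is $k$-rotational.

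The only delicate point is the step used in $(3)\Rightarrow(1)$ of Lemma~\ref{Zpn}: finding an automorphism in the Singer cycle whose order is a power of $k$ and which satisfies the hypotheses of Lemma~\ref{Costr}. When $k$ is not a prime power, this can fail, because a power of $k$ need not divide $p^n-1$. If that step gives trouble, I would bypass Lemma~\ref{Costr} and construct the partition directly in the elementary abelian case. Take $\sigma$ a generator of the Singer cycle $T$, which acts regularly on $G^*$. Then $-1$ is the unique involution of $T$, namely $\sigma^{(p^n-1)/2}$ for odd $p$. Let $S$ be the union of the cosets of $\langle\sigma^k\rangle$ in $T$ indexed by $0 \bmod k$, viewed as a subset of $G^*$ through the regular action. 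Since $2k\mid p^n-1$, the element $-1$ lies in $\langle\sigma^k\rangle$, so $S=S^{-1}$. The sets $S,S^{\sigma},\dots,S^{\sigma^{k-1}}$ are the $k$ cosets of $\langle\sigma^k\rangle$, so they partition $G^*$. For $p=2$ every element is its own inverse, so $k\mid 2^n-1$ suffices. This makes each factor $k$-rotational for all $k$, after which Lemma~\ref{RtoDP} finishes the proof.
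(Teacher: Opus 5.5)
Your skeleton matches the paper's proof. Specifically:
\begin{itemize}
\item $(1)\Rightarrow(2)$ follows from the definitions.
\item $(2)\Rightarrow(3)$ comes from restricting to the characteristic Sylow subgroups via Lemma~\ref{subChar}, followed by the parity count.
\item $(3)\Rightarrow(1)$ makes each $\ZZ_{p_i}^{r_i}$ $k$-rotational and inducts with Lemma~\ref{RtoDP}.
\end{itemize}

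The routes differ only in how the elementary abelian factors are shown to be $k$-rotational. There your fallback is not optional: it is a necessary repair. The paper chooses $\sigma$ in the Singer cycle $T$ of order $k^e$ and applies Lemma~\ref{Costr}.

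Your explanation of why this can fail is not right. The exponent $e=1$ is always available, since $k\mid p^n-1$. Also, being a prime power does not protect $k$. The real obstruction is parity. Suppose $k$ is even and $k^2\nmid p^n-1$. Then every element of $T$ whose order is a power of $k$ has order exactly $k$, so $\sigma^{k/2}=-1$. Consequently the $\langle\sigma\rangle$-orbits on $\Omega=\{\{g,g^{-1}\}\}$ have size $k/2$, and $\langle\sigma^k\rangle=1$ splits each orbit into $k/2$ pieces rather than $k$. This happens, for example, for $\ZZ_{13}$ with $k=6$ and for $\ZZ_{41}$ with $k=4$.

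The paper's route does work in the remaining cases: for odd $k$ take $o(\sigma)=k$, and for even $k$ with $k^2\mid p^n-1$ take $o(\sigma)=k^2$.

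In fact Lemma~\ref{Costr} is false as stated. On $\ZZ_5$ with $k=4$, the map $\sigma\colon x\mapsto 2x$ has order $4$, and both $\sigma$ and $\sigma^2$ are fixed-point-free. Yet $\ZZ_5$ is not $4$-rotational, because no singleton in $\ZZ_5^*$ is inverse-closed.

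Your construction avoids this problem. You take a generator $\sigma$ of the whole Singer cycle and let $S$ be an orbit of $\langle\sigma^k\rangle$. This $S$ is inverse-closed because $\langle\sigma^k\rangle$ has even order $(p^n-1)/k$ and therefore contains $-1$. Insisting on an automorphism whose order is a power of $k$ buys nothing here.

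With your direct construction in place of Lemma~\ref{Zpn} $(3)\Rightarrow(1)$, the proof is complete. One minor point: the ``union of the cosets indexed by $0 \bmod k$'' is just the single coset $\langle\sigma^k\rangle$.
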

\begin{proof}
If $G$ is $k$-rotational, then $G$ has $k$-if property.
	
Suppose $G$ possesses the $k$-if property. Since each $\ZZ_{p_i}^{r_i}$ is a characteristic subgroup of $G$, Lemma \ref{subChar} implies that $\ZZ_{p_i}^{r_i}$ also has the $k$-if property for every $i = 1, \dots, n$. This yields that $2k\mid (|\ZZ_{p_i}^{r_i}|-1)=(p_i^{r_i}-1)$ when $p_i$ is odd; $k\mid (p_i^{r_i}-1)$ when $p_i=2$.

We now prove  $(3)\Rightarrow(1)$ by induction on $n$. For $n=1$, $G=\ZZ_{p_1}^{r_1}$ is a $k$-rotational group by Lemma \ref{Zpn}.
Set $G_0:=\ZZ_{p_1}^{r_1}\times\ZZ_{p_2}^{r_2}\times\cdots\times \ZZ_{p_{n-1}}^{r_{n-1}}$ be a subgroup of $G$. By the induction hypothesis, $G_0$ is $k$-rotational. Also, $\ZZ_{p_n}^{r_n}$ is $k$-rotational. According to Lemma \ref{RtoDP}, $G$ is $k$-rotational, as $G\cong G_0\times \ZZ_{p_n}^{r_n}$.
This completes this proof.
\end{proof}

\begin{theorem}\label{Z489}
    Let $G$ be an abelian CI-group. If $G$ has a Sylow subgroup isomorphic to $\ZZ_4$, $\ZZ_8$ or $\ZZ_9$, then $G$ has no $k$-if property.
\end{theorem}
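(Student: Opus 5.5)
Suppose $G$ is an abelian CI-group with the $k$-if property, and let $P$ be a Sylow $p$-subgroup of $G$ with $P\cong\ZZ_4$, $\ZZ_8$ or $\ZZ_9$. The plan is to pass to a small characteristic subgroup of $G$ and then use the counting condition stated before Lemma \ref{subChar}: for a CI-group with the $k$-if property, $2k\mid n(G,l)$ for $l\neq 2$ and $k\mid n(G,2)$.

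The reduction goes as follows. Since $G$ is abelian, the Sylow subgroup $P$ is characteristic in $G$. Moreover, the unique subgroup $\Omega_1(P)$ of order $p$ in $P$ is characteristic in $P$, hence characteristic in $G$. By Lemma \ref{subChar}, both $P$ and $\Omega_1(P)$ have the $k$-if property; Proposition \ref{CharacterCI} confirms that both are CI-groups, so the counting condition applies to them.

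Now compare the two counts in each case.
\begin{itemize}
\item If $P\cong\ZZ_9$, then $\Omega_1(P)\cong\ZZ_3$, so $n(\ZZ_3,3)=2$ and hence $2k\mid 2$. This forces $k=1$, contradicting $k\geq 2$.
\item If $P\cong\ZZ_4$ or $\ZZ_8$, then $\Omega_1(P)\cong\ZZ_2$ is too weak, since $n(\ZZ_2,2)=1$ only gives $k\mid 1$. That already yields a contradiction, but it is worth double-checking directly. Applying Lemma \ref{subChar} to $\Omega_1(P)=\ZZ_2$, the partition of $H^*$ into $k\geq 2$ nonempty parts is impossible because $|H^*|=1$.
\end{itemize}

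The same argument covers $\ZZ_3$ in the first case: $|\ZZ_3^*|=2$, and the $k$ parts $A_i$ are nonempty inverse-closed sets. Each $A_i$ must contain a pair $\{g,g^{-1}\}$ of non-involutions, so the parts have size $2$ and $k=1$. So in all cases it suffices to use the nonemptiness of the parts $A_i$ in Lemma \ref{subChar}, which holds because $S_i$ and $S_0$ are equivalent under an automorphism preserving $H$, so $|A_i|=|A_0|$, and these sizes sum to $|H^*|>0$.

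The only delicate point is making sure Lemma \ref{subChar} is applied to a \emph{nontrivial} characteristic subgroup and that the parts $A_i$ really are nonempty. An alternative that avoids relying on the minimal subgroup is to work with $P$ itself:
\begin{itemize}
\item For $\ZZ_4$: $n(\ZZ_4,2)=1$, so $k\mid 1$.
\item For $\ZZ_8$: $n(\ZZ_8,2)=1$, so again $k\mid 1$.
\item For $\ZZ_9$: $n(\ZZ_9,3)=2$, so $2k\mid 2$.
\end{itemize}
Each of these contradicts $k\geq 2$. I would present this version, since it is a direct consequence of $G_p$ being characteristic, Lemma \ref{subChar}, and the counting condition.
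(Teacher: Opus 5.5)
Your proof is correct, and the version you say you would present is the paper's argument: the Sylow subgroup is characteristic in the abelian group $G$, so by Lemma~\ref{subChar} it suffices to rule out $\ZZ_4,\ZZ_8,\ZZ_9$. These fail because $\ZZ_4$ and $\ZZ_8$ each have a unique involution (giving $k\mid 1$), and $\ZZ_9$ has only two elements of order $3$, which are mutually inverse (giving $2k\mid 2$); the detour through $\Omega_1(P)$ is also valid but reduces to the same counts in $\ZZ_2$ and $\ZZ_3$.
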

\begin{proof}
    Since every Sylow subgroup of an abelian group is characteristic, it suffices to show that $\mathbb{Z}_4$, $\mathbb{Z}_8$, and $\mathbb{Z}_9$ themselves do not have the $k$-if property, by Lemma \ref{subChar}.
Since $\ZZ_l$ has only one involution for $l\in\{4,8\}$, we have that $\ZZ_4$ and $\ZZ_8$ have no $k$-if property for $k>1$.
    For $\ZZ_9$, there exists only two elements of order $3$ in $\ZZ_9$ and these two elements are inverse to each other. This implies that $k = 1$, contradicting the assumption that $k \geq 2$.
\end{proof}

Combining Proposition~\ref{CIgroup}, Theorems~\ref{Case1} and \ref{Z489}, we get the following corollary.

\begin{corollary}\label{Abelian=ZZZ}
    Let $G$ be an abelian CI-group with $k$-if property. Then every Sylow subgroup of $G$ is elementary abelian.
\end{corollary}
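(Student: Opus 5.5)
By Proposition~\ref{CIgroup}, every Sylow subgroup of the abelian CI-group $G$ is either elementary abelian or isomorphic to one of $\ZZ_4$, $\ZZ_8$, $\ZZ_9$, $Q_8$. Since $G$ is abelian, the nonabelian option $Q_8$ cannot occur. So each Sylow subgroup $G_p$ is elementary abelian or is one of $\ZZ_4,\ZZ_8,\ZZ_9$.

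I will argue by contradiction. Suppose that for some prime $p$ the Sylow subgroup $G_p$ is not elementary abelian. Then $G_p\cong\ZZ_4$, $\ZZ_8$ or $\ZZ_9$. But $G$ is assumed to have the $k$-if property with $k\ge 2$, and this contradicts Theorem~\ref{Z489}. Hence every Sylow subgroup of $G$ is elementary abelian.

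This is a direct combination of results already stated, so I expect no real obstacle. The one point to check is that the case $Q_8$ in Proposition~\ref{CIgroup} is correctly excluded, which holds because $G$ is abelian.

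Optionally, Theorem~\ref{Case1} then gives more: $G\cong\prod_i\ZZ_{p_i}^{r_i}$ with the divisibility conditions $2k\mid p_i^{r_i}-1$ for odd $p_i$ and $k\mid 2^{r_i}-1$ for $p_i=2$. The corollary itself does not need this.
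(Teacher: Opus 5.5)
Your proof is correct and is essentially the paper's argument: the paper obtains the corollary by combining Proposition~\ref{CIgroup} (where $Q_8$ cannot occur because $G$ is abelian) with Theorem~\ref{Z489}, exactly as you do. Your remark that Theorem~\ref{Case1} is not needed for the statement itself is also accurate, since it only supplies the finer characterization of such groups.
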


\subsection{Case 2}
\quad
\vspace{0.5em}

In this part, we consider the case that the CI-group $G=U\times V$, $(|U|,|V|)=1$; $U$ is an abelian group and $V\cong A\in\{Q_8, \ZZ^2_2\rtimes\ZZ_3, \ZZ^2_2\rtimes \ZZ_9, Q_8\rtimes \ZZ_3, Q_8\rtimes \ZZ_9, \ZZ^2_3\rtimes Q_8\}$.
Followed from Theorem~\ref{Z489} that we only need to consider the case that the Sylow subgroups of $U$ are elementary abelian group.
The main result in this section is,

\begin{theorem}\label{Case2}
Let $G=U\times V$ be a CI-group with $(|U|,|V|)=1$, where $U$ is abelian and all the Sylow subgroups of $U$ are elementary abelian group, and $V\cong A\in\{Q_8, \ZZ^2_2\rtimes\ZZ_3, \ZZ^2_2\rtimes \ZZ_9, Q_8\rtimes \ZZ_3, Q_8\rtimes \ZZ_9, \ZZ^2_3\rtimes Q_8\}$. Then $G$ has no $k$-if property.
\end{theorem}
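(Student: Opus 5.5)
The plan is to reduce to the factor $V$ and then use element-order counting. Since $(|U|,|V|)=1$, the subgroup $V$ is characteristic in $G=U\times V$: it is the set of elements of $G$ whose order divides $|V|$. Proposition~\ref{CharacterCI} then says $V$ is a CI-group. By Lemma~\ref{subChar}, if $G$ had the $k$-if property for some $k\geq 2$, so would $V$. It therefore suffices to show that no $V$ in the list $\{Q_8, \ZZ^2_2\rtimes\ZZ_3, \ZZ^2_2\rtimes \ZZ_9, Q_8\rtimes \ZZ_3, Q_8\rtimes \ZZ_9, \ZZ^2_3\rtimes Q_8\}$ has the $k$-if property. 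Where convenient, we pass further to a characteristic subgroup of $V$, again by Lemma~\ref{subChar}.

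The main tool is the necessary condition stated before Lemma~\ref{subChar}. If a CI-group has the $k$-if property, then $2k\mid n(G,l)$ for $l\neq 2$ and $k\mid n(G,2)$. This holds because each $S_i$ is a union of inverse pairs and the $S_i$ are images of one another under automorphisms, so all $S_i$ contain the same number of elements of each order. We then check the groups one by one.
\begin{enumerate}
\item $Q_8$ has a unique involution, so $n(Q_8,2)=1$ and $k\mid 1$ is impossible. The same argument applies to $Q_8\rtimes\ZZ_3$, $Q_8\rtimes\ZZ_9$ and $\ZZ_3^2\rtimes Q_8$ whenever they have a unique involution. Alternatively, $Q_8$ is the characteristic Sylow $2$-subgroup in the first two of these, so Lemma~\ref{subChar} reduces them to $Q_8$ directly.
\item For $\ZZ_3^2\rtimes Q_8$, where $Q_8$ acts fixed-point-freely, the central involution of $Q_8$ inverts $\ZZ_3^2$. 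The Sylow $2$-subgroups are then the $9$ conjugates of $Q_8$, which all share the involution only if it is central. It is not central, so $n(V,2)=9$. We therefore switch to another order: each Sylow $2$-subgroup $Q_8$ contains $6$ elements of order $4$, and these are non-conjugate across distinct Sylows. This gives $n(V,4)=54$ and $n(V,3)=8$. Both are compatible with $k=2$, so counting alone fails here. Instead we note that the $Q_8$-action on $\ZZ_3^2$ is via the unique (up to conjugacy) embedding $Q_8\le \mathrm{SL}(2,3)$, and that $\Aut(V)$ induces on $\ZZ_3^2$ a subgroup of $N_{\GL(2,3)}(Q_8)=\GL(2,3)$. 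For $k=2$ the elements of order $2$ would need splitting $9=$ odd, which is impossible. So $k\mid 9$ with $k\ge 2$ forces $k\in\{3,9\}$, and then $2k\mid 8$ fails. This proves the case.
\item For $\ZZ_2^2\rtimes\ZZ_3\cong A_4$ we have $n(V,3)=8$, $n(V,2)=3$, so $k\mid 3$ and $2k\mid 8$. These are incompatible for $k\ge2$.
\item For $\ZZ_2^2\rtimes\ZZ_9$ the subgroup $\ZZ_2^2$ is characteristic, giving $k\mid 3$ and hence $k=3$. The elements of order $3$ all lie in the subgroup $\ZZ_2^2\times\langle z^3\rangle$. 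Their number is $n(V,3)=2$, because $z^3$ is central: it acts on $\ZZ_2^2$ with order dividing $3$ through an element of order $3$ in $\Aut(\ZZ_2^2)$, and $z^3$ acts as the cube of that element, which is trivial. So $2k\mid 2$ is impossible.
\end{enumerate}
In each case we record $n(V,2)$, $n(V,3)$ and, if needed, $n(V,4)$, and show the divisibility conditions have no common solution $k\ge 2$.

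The main obstacle is carrying out these element-order counts correctly for the non-abelian groups. This is especially true for $\ZZ_3^2\rtimes Q_8$, where the quickest contradiction comes from comparing $n(V,2)=9$ (odd, so $k$ odd and $k\mid 9$) with $n(V,3)=8$ (so $2k\mid 8$ forces $k$ a power of $2$). That comparison is what I would try first there. For the $Q_8$-containing groups, I would first check that the relevant subgroup is characteristic, namely a normal Sylow subgroup or the set of elements of given order. Otherwise I would fall back on counting involutions: each of these groups has an odd number of involutions and also an order $l\neq 2$ with $n(V,l)$ a power of $2$, so no $k\ge2$ can satisfy both conditions.
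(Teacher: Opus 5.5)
Your proof is correct, but it takes a different route from the paper after the common first step. That step is the same in both: $V$ is characteristic in $G$, so Lemma~\ref{subChar} reduces everything to $V$.

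\textbf{The paper's route.} It uses only that the $k$ parts have equal size, so $k\mid |V|-1$. For five of the six groups, $|V|-1$ is an odd prime ($7,11,23,71,71$). This forces $k=|V|-1$, so every part is an inverse-closed singleton and every non-identity element of $V$ would have to be an involution. For $\ZZ_2^2\rtimes\ZZ_9$ it compares $k\mid 35$ with $k\mid n(V,2)=3$.

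\textbf{Your route.} You apply the order-stratified conditions $k\mid n(V,2)$ and $2k\mid n(V,l)$ group by group. This needs genuine structural input: the Frobenius structure of $\ZZ_3^2\rtimes Q_8$ gives $n(V,2)=9$ and $n(V,3)=8$, and the centrality of $z^3$ gives $n(\ZZ_2^2\rtimes\ZZ_9,3)=2$.

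\textbf{What each buys.} Your argument does not depend on the numerical accident that $|V|-1$ is prime. It also explains the failure uniformly: an odd number of involutions is set against a power-of-two count of elements of some other order. This is the same mechanism the paper uses in Case~3. The paper's route is shorter and needs almost no knowledge of the groups. Note, however, that it applies Lemma~\ref{p+1Grp}, which is stated for abelian groups, to non-abelian $V$. This is harmless, since that proof never uses commutativity.

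\textbf{Corrections to item 2.} Your remark that $n(V,4)=54$ and $n(V,3)=8$ are ``both compatible with $k=2$, so counting alone fails'' is false. We have $4\nmid 54$, and also $2\nmid n(V,2)=9$. In fact $2k\mid 54$ and $2k\mid 8$ are already incompatible for $k\ge 2$. The detour through $N_{\GL(2,3)}(Q_8)$ is unnecessary and should be removed. The step that actually closes the case, $k\mid 9$ against $2k\mid 8$, is pure counting and is correct.

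\textbf{Other tidying.} Drop the hedge ``whenever they have a unique involution'' in item 1, since $\ZZ_3^2\rtimes Q_8$ has nine involutions. In item 4, $n(V,3)=2$ alone already gives $2k\mid 2$, so the involution count there is not needed.
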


To prove the theorem above, we require the following lemma.

\begin{lemma}\label{p+1Grp}
Let $G$ be an abelian group that is not an elementary abelian $2$-group. If $|G|-1$ is an odd prime, then $G$ has no $k$-if property.
\end{lemma}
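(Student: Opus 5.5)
The plan is a counting argument: combined with the primality of $|G|-1$, it forces every connection set in a $k$-if partition to be a singleton. That in turn forces every non-identity element of $G$ to be an involution.

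Throughout, we use the $k$-if property in the sense used in this section, as in the remark preceding Lemma~\ref{subChar}. It means there is a partition $\{S_0=S,S_1,\dots,S_{k-1}\}$ of $G^*$ into inverse-closed subsets with $\Cay(G,S_i)\cong\Cay(G,S)$ for all $i\in\ZZ_k$, where $k\geq 2$. This reading is essential: a factorization of $K_{|G|}$ whose factors are merely abstractly isomorphic to a Cayley graph would not suffice. For instance, $K_4$ splits into three perfect matchings, each isomorphic to $\Cay(\ZZ_4,\{2\})$, although $\ZZ_4$ is not elementary abelian.

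Suppose, for a contradiction, that such a partition exists, and write $q:=|G|-1$, an odd prime.
\begin{enumerate}
\item Each $\Cay(G,S_i)$ is $|S_i|$-regular, and isomorphic graphs have equal valency. Hence $|S_0|=|S_1|=\cdots=|S_{k-1}|=:d$, and since the $S_i$ partition $G^*$ we get $kd=|G^*|=q$.
\item Because $q$ is prime and $k\geq 2$, this forces $k=q$ and $d=1$. So each $S_i$ is a single element $\{g_i\}$.
\item Each $S_i=\{g_i\}$ is inverse-closed, so $g_i=g_i^{-1}$. Thus every non-identity element of $G$ is an involution, and $G$ has exponent $2$.
\item An abelian group of exponent $2$ is an elementary abelian $2$-group. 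This contradicts the hypothesis, so $G$ has no $k$-if property.
\end{enumerate}

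We expect no real obstacle; the only delicate point is the interpretation of the $k$-if property fixed above. Oddness of $q$ is only used to exclude $|G|=3$ trivially; for example, $q=3$ gives $G\cong\ZZ_4$ or $\ZZ_2^2$, consistent with the lemma. Commutativity of $G$ is needed only in the last step, to pass from exponent $2$ to elementary abelian.
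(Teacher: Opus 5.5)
Your proof is correct and is essentially the paper's argument. Equal valencies together with the primality of $|G|-1$ force $k=|G|-1$ with singleton, inverse-closed parts, so every non-identity element is an involution, which contradicts the hypothesis; the paper leaves the equal-size step implicit, and commutativity is not actually needed at the end, since any group of exponent $2$ is abelian. Your insistence on the partition-of-$G^*$ reading of the $k$-if property, which is the one the paper adopts before Lemma~\ref{subChar}, is well founded: as your $K_4$ example shows, $\ZZ_4$ would otherwise be a counterexample to the lemma.
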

\begin{proof}
For contradiction, suppose that $G$ has $k$-if property and that $|G|-1=p$ is an odd prime. Then $G^*$ possesses a unique partition $\{\{g_1\},\{g_2\},\cdots,\{g_p\}\}$ of size greater than $1$. Hence, for each $g_i\in G^*$, $\Cay(G,g_i)$ is a $p$-if Cayley graph, which can happen only if $g_i^2=1$, a contradiction.
\end{proof}

The above lemma now allows us to prove Theorem \ref{Case2}.
\begin{proof}
	Let $G$ be a CI-group as defined in Theorem \ref{Case2}. Then $V$ is a characteristic subgorup of $G$.
 We will show that $V$ has no $k$-if property, from which Theorem \ref{Case2} follows by Lemma \ref{subChar}.
For the cases where $V\ncong\ZZ^2_2\rtimes \ZZ_9$, we have $|V|-1=p$, an odd prime. By Lemma \ref{p+1Grp}, this implies that $V$ has no $k$-if property.
	Now consider the case $V\cong\ZZ^2_2\rtimes \ZZ_9$. Since $|V|-1 = 35$, the group $V$ can have the $k$-if property only for $k\in \{5, 7, 35\}$.
Since $V$ has only three elements of order $2$, the possible values of $k$ are restricted to $1$ and $3$. This contradicts the requirement that $k \in \{5,7,35\}$, so $V$ cannot have the $k$-if property.
\end{proof}

\subsection{Case 3}
\quad
\vspace{0.5em}

The main result of this section is the following theorem.

\begin{theorem}\label{Case3}
	Let $G=U\times V$ be a CI-group with $(|U|,|V|)=1$, where $U$ is an abelian group and
 $V$ is one of the following types:
\begin{enumerate}
	\item $H_e(M,2^r3^s,l)$, where $M$ is abelian, $r$, $s\geq 0$ and $r+s\geq 1;$
	\item $H_2(M,2^r,l)\times H_3(M',3^s,l')$, where $|M|,2,|M'|$ and $3$ are pairwise coprime; $M$ and $M'$ are abelian, $1\leq r\leq 3$, $1\leq s\leq 2$ and $l, l' > 1$.
\end{enumerate} Then $G$ has no $k$-if property.
\end{theorem}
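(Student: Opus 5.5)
The idea is to exploit two $\Aut(G)$-invariant subsets of $G$ whose sizes differ by exactly one: a characteristic subgroup $C$ with $G/C\neq 1$ on which $\Aut(G)$ induces the trivial action, and a nontrivial coset of $C$. Suppose $\{S_0,\dots,S_{k-1}\}$ is a partition of $G^*$ with $S_i=S_0^{\alpha_i}$ for some $\alpha_i\in\Aut(G)$. This is exactly the situation described before Lemma~\ref{subChar}, since $G$ is a CI-group. Then any $\Aut(G)$-invariant subset $X\subseteq G^*$ is split into the $k$ pieces $X\cap S_i=(X\cap S_0)^{\alpha_i}$, which all have the same size. Hence $k\mid |X|$. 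This uses only the invariance of $X$, not inverse-closure. The plan is to apply this once to $X=C^*$, giving $k\mid |C|-1$, and once to a nontrivial coset $X=gC$, giving $k\mid |C|$. Together these force $k=1$, contradicting $k\geq 2$.

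\textbf{Construction of $C$ in type (1).} Let $V=H_e(M,2^r3^s,l)=M\rtimes\langle z\rangle$ and $G=U\times V$. Since $(l-1,m)=1$, we have $[M,z]=M^{l-1}=M$, and $V/M$ is cyclic. Hence $G'=V'=M$, and $M$ is characteristic in $G$. Set $C:=C_G(M)$, which is characteristic as well. Because $M$ is abelian of exponent $m$, the element $z^j$ centralizes $M$ if and only if $l^j\equiv 1\pmod m$, that is, if and only if $e\mid j$. So $C=U\times M\times\langle z^e\rangle$ and $G/C\cong\ZZ_e$ with $e\geq 2$, so $C\neq G$.

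\textbf{$\Aut(G)$ fixes each coset of $C$.} Let $\alpha\in\Aut(G)$ and write $z^\alpha=z^jc$ with $c\in C$. For $x\in M$ we have $x^\alpha\in M$ and
$$(x^\alpha)^{z^\alpha}=(x^z)^\alpha=(x^l)^\alpha=(x^\alpha)^l .$$
Thus $z^j$ acts on $M$ as the $l$-th power map, so $l^j\equiv l\pmod m$, and since $(l,m)=1$ this gives $j\equiv 1\pmod e$. Therefore $(zC)^\alpha=zC$ for every $\alpha$. Applying the counting principle to $X=zC\subseteq G^*$ gives $k\mid |C|$, and applying it to $X=C^*$ gives $k\mid |C|-1$. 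This is the desired contradiction.

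\textbf{Type (2).} Let $V=H_2(M,2^r,l)\times H_3(M',3^s,l')$. The same computation gives $G'=M\times M'$ and $C:=C_G(G')=U\times M\times M'\times\langle z^2,z'^3\rangle$. The generator $z$ acts as $x\mapsto x^l$ on $M$ and trivially on $M'$. The intertwining identity above then forces $z^\alpha$ to induce the same action on $M\times M'$, so $z^\alpha\in zC$. Running the identical two-sided counting argument on $C^*$ and on $zC$ again yields $k\mid |C|$ and $k\mid|C|-1$.

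\textbf{Expected main obstacle.} The step needing the most care is the verification that $C$ is exactly the claimed subgroup and that $\Aut(G)$ induces the identity on $G/C$. This relies on $M$ containing an element of order $m$, so that the action of $z^j$ on $M$ determines $l^j$ modulo $m$, and on $l$ being a unit modulo $m$. Once this is in place, the divisibility contradiction is immediate and needs no case distinction on $e$, $r$ or $s$.
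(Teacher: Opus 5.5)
Your proof is correct, and it takes a genuinely different route from the paper.

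\textbf{The paper's route.} It first passes from $G$ to $V$, and then to each factor $H_e(M,2^r3^s,l)$, using Lemma~\ref{subChar}, which rests on Proposition~\ref{CharacterCI}. It then uses $k\mid n(H_e,2)$ and $2k\mid n(H_e,3)$ to force $\langle z\rangle\cong\ZZ_e$ (Lemma~\ref{Z=Z_2346}). Finally, in Lemma~\ref{Vnoif} it treats $e\in\{2,3,4,6\}$ one at a time. In each case it finds a characteristic subgroup $M_2\le M$ with $k\mid |M_2|$, obtained by counting involutions or elements of order $3$. Theorem~\ref{Case1} applied to $M_2$ then gives $2k\mid p^s-1$ for each Sylow order, so $\gcd(k,|M_2|)=1$ and hence $k=1$.

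\textbf{Your route.} You replace all of this with one uniform obstruction: the proper characteristic subgroup $C=C_G(G')$ together with the $\Aut(G)$-invariant nontrivial coset $zC$. Counting gives $k\mid |C|-1$ and $k\mid |C|$ simultaneously.

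\textbf{What your route buys.}
\begin{enumerate}
\item It works directly on $G=U\times V$, with no reduction to $V$ or to $H_e$.
\item It uses the CI property only for $G$ itself, not for its characteristic subgroups.
\item It needs no information on $r,s$, so Lemma~\ref{Z=Z_2346} becomes unnecessary.
\item It needs no case split on $e$ and no appeal to the abelian classification.
\item It isolates a general criterion: a CI-group with a proper characteristic subgroup having an $\Aut(G)$-invariant nontrivial coset has no $k$-if property for any $k\geq 2$.
\end{enumerate}
The paper's approach stays within the element-order counting it uses in the other cases, at the cost of considerably more case work.

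\textbf{One step to state explicitly in type (2).} You need that $z$ and $z^{\alpha}$ induce the same automorphism of $G'$. The cleanest justification is that $z$ acts on the abelian group $G'=M\times M'$ as a single power map $y\mapsto y^{L}$, where $L\equiv l\pmod m$ and $L\equiv 1\pmod{m'}$. Such a map commutes with $\alpha|_{G'}$, giving $z^{-1}z^{\alpha}\in C_G(G')$. Equivalently, note that $M$ and $M'$ are characteristic as Hall subgroups of $G'$.
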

\begin{proof}
Since $(|U|,|V|)=1$ and $V$ is normal in $G$, it follows that $V$ is a characteristic subgroup of $G$. Furthermore, every subgroup $H_e(M,2^r3^s,l)$ of type $e$ is a characteristic subgroup of $V$. Since Lemma \ref{Vnoif} establishes that every subgroup of the form $H_e(M,2^r3^s,l)$ (for any type $e$) has no $k$-if property, Lemma \ref{subChar} implies that $G$ also has no $k$-if property.
\end{proof}


\begin{lemma}\label{Z=Z_2346}
	Let $H_e(M,2^r3^s,l):=\langle M,z|z^{2^r3^s}=1,\forall x\in M,x^z=x^l\rangle$ is of type $e$, as defined in Definition \ref{He}. If $H_e(M,2^r3^s,l)$ is a CI-group and has $k$-if property, then $\langle z\rangle\cong \ZZ_e$ with the type $e\in\{2,3,4,6\}.$
\end{lemma}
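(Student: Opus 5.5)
The plan is to study the centre $Z:=Z(H)$ of $H:=H_e(M,2^r3^s,l)$. Since $Z$ is characteristic, Lemma \ref{subChar} forces $Z$ to be trivial, and triviality of $Z$ will give $|z|=e$.

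\emph{Step 1: compute the centre.} Write an arbitrary element of $H$ as $xz^i$ with $x\in M$ and $0\le i<2^r3^s$.
\begin{itemize}
\item Since $M$ is abelian, conjugation by $xz^i$ acts on $M$ as $y\mapsto y^{l^i}$.
\item So if $xz^i$ centralises $M$, then $l^i\equiv 1\pmod m$, because $m$ is the exponent of $M$. This gives $e\mid i$.
\item Conversely, $z^e$ acts on $M$ as $y\mapsto y^{l^e}=y$, and it commutes with $z$, so $z^e$ is central.
\item If moreover $xz^i$ commutes with $z$, then $x^z=x$, i.e. $x^{l-1}=1$. Since $(l-1,m)=1$, this forces $x=1$.
\end{itemize}
Hence $Z=\langle z^e\rangle$. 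It is cyclic of order $2^r3^s/e$, because $\langle z\rangle\cap M=1$ and $z$ has order exactly $2^r3^s$ in the semidirect product. Here I use that $M\neq 1$, which is implicit since $e\ge 2$ requires $m>1$.

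\emph{Step 2: suppose $Z\neq 1$.} The centre is a characteristic subgroup, so Lemma \ref{subChar} shows that $Z$ has the $k$-if property for the same $k\ge 2$. Here $Z$ is CI by Proposition \ref{CharacterCI}. Its order is a nontrivial number $2^a3^b$. I now rule out every nontrivial cyclic group of such an order, using the counting observation before Lemma \ref{subChar}: $k\mid n(G,2)$ and $2k\mid n(G,t)$ for $t\neq 2$.
\begin{itemize}
\item If $a\ge 1$, then $Z$ has a unique involution. So $n(Z,2)=1$ and $k\mid 1$, which contradicts $k\ge2$. This is the argument of Theorem \ref{Z489}.
\item If $a=0$, then $b\ge1$ and $Z$ has exactly two elements of order $3$, which are mutually inverse. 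So $2k\mid 2$ and $k=1$, again a contradiction.
\end{itemize}
Hence $Z=1$, that is, $z^e=1$. Since $z$ acts on $M$ with order exactly $e$, the order of $z$ is exactly $e$, and $\langle z\rangle\cong\ZZ_e$.

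\emph{Step 3: the type.} The table in Definition \ref{He} lists only $e\in\{2,3,4,6\}$, which is the remaining conclusion.

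The main point needing care is Step 1, specifically the exact identification $Z(H)=\langle z^e\rangle$. This uses both coprimality conditions in the definition: $(l-1,m)=1$ kills any $M$-component, and $e=o(l\pmod m)$ pins down the $z$-exponent. Once this is in place, the rest is the short counting argument on a cyclic $\{2,3\}$-group.
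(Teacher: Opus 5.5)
Your proof is correct, but it takes a different route from the paper's. The paper never looks at the centre. It works inside $H_e$ one type at a time, solving $(z^ax)^2=z^{2a}x^{l^a+1}=1$ and $(z^by)^3=z^{3b}y^{l^{2b}+l^b+1}=1$. Assuming $|z|>e$, it uses $l^e\equiv 1\pmod m$ together with $(|M|,|\langle z\rangle|)=1$ to force $x=1$ or $y=1$. So either the only involution of $H_e$ is a power of $z$, giving $n(H_e,2)=1$, or the only elements of order $3$ are two mutually inverse powers of $z$, giving $n(H_e,3)=2$. This contradicts $k\mid n(H_e,2)$ or $2k\mid n(H_e,3)$ directly in $H_e$. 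You instead compute $Z(H)=\langle z^e\rangle$ once for all types, using $e=o(l\pmod m)$ and $(l-1,m)=1$. You then pass to this characteristic subgroup via Lemma~\ref{subChar} and apply the same counting to a nontrivial cyclic $\{2,3\}$-group. Your version is uniform in $e$, with no four-way case split. It also explains what the paper's computation is finding: the unique involution, or the pair of elements of order $3$, that the paper exhibits lies in $\langle z^e\rangle$ in every case. The paper's version, in exchange, stays entirely inside $H_e$ and needs only the element-order counting observation, not Lemma~\ref{subChar} or Proposition~\ref{CharacterCI}. One point you leave implicit is $e\mid 2^r3^s$, which you need for $|Z(H)|=2^r3^s/e$; it can be read off the table in Definition~\ref{He}. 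Your final step does not depend on it anyway, since $Z(H)=1$ gives $z^e=1$ and $e\mid |z|$ because $z$ acts on $M$ with order $e$.
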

\begin{proof}
Set $H_e := H_e(M,2^r3^s,l) = M \rtimes Z$ as in Definition \ref{He}, where $Z \cong \mathbb{Z}_{2^r3^s}$. Note that $|M|$ is odd when $r \geq 1$, and $3 \nmid |M|$ when $s \geq 1$. Hence $(|M|,|Z|)=1$.
	Every element $v \in H_e$ can be uniquely expressed as $v = z^t x$ for some $x \in M$ and integer $t$ with $1 \leq t \leq |Z|$.
    Now assume that $\ZZ_2\lesssim Z$ or $\ZZ_3\lesssim Z$. Then $k\mid n(H_e,2)$ and $2k\mid n(H_e,3)$. Based on this fact, we will prove $\langle z\rangle\cong\ZZ_e$ by determining $n(H_e,3)$ and $n(H_e,2)$.

For all $x\in M$, we have $o(x)\neq 2$ when $e=2,4,6$, and $o(x)\neq 3$ when $e=3,6$.
Let $z^a x,\ z^b y \in H_e$, where $a,b$ are integers, $x,y \in M$, and suppose that $o(z^a x)=2$ and $o(z^b y)=3$.
 Then $(z^ax)^2=z^{2a}x^{l^a+1}=1$ and $(z^by)^3=z^{3b}y^{l^{2b}+l^b+1}=1$.
We now prove our result by analyzing the type $e$ of the group $H_e$.

\vskip 3mm	
	Case $e=2:$
\vskip 3mm
    Then we get $Z\cong \mathbb{Z}_{2^r}$ by Definition \ref{He}. If $r>1$, then $a=2^{r-1}$.
    Recall that $l^e\equiv 1\pmod{m}$. Consequently, $l^{2^{r-1}}+1=(l^2)^{2^{r-2}}+1\equiv1^{2^{r-2}}+1\equiv 2\pmod{m}$, which implies that $x^2=1$.
    This forces $x=1$ and consequently $n(H_e,2)=1$, as $(|M|,|Z|)=1$. This contradicts to the $k$-if property of $H_e$. Hence, $Z\cong \mathbb{Z}_2$.

\vskip 3mm	
    Case $e=3:$
\vskip 3mm
	Then we have $Z\cong \mathbb{Z}_{3^s}$ by Definition \ref{He}.
If $s>1$, then $b=\pm3^{s-1}$.  From $l^e \equiv 1 \pmod{m}$, we deduce $l^{2b}+l^b+1=(l^3)^{\pm2\cdot3^{s-2}}+(l^3)^{\pm3^{s-2}}+1\equiv1^{\pm2\cdot3^{s-2}}+1^{\pm3^{s-2}}+1\equiv 3\pmod{m}$, yielding $y^3=1$.
This implies that $y=1$ and $n(H_e,3)=2$, as $(|M|,|Z|)=1$. This contradicts to the $k$-if property of $H_e$. Consequently, $Z\cong \mathbb{Z}_3$.

\vskip 3mm	
    Case $e=4:$
\vskip 3mm
	Then we get $Z\cong \mathbb{Z}_{2^r}$ with $r\geq 2$, by Definition \ref{He}.  If $r>2$, then $a=2^{r-1}$. Thus, $l^{2^{r-1}}+1=(l^4)^{2^{r-3}}+1\equiv1^{2^{r-3}}+1\equiv 2\pmod{m}$, which implies that $x^2=1$.
 This implies that $x=1$ and $n(H_e,2)=1$, as $(|M|,|Z|)=1$. This contradicts to the $k$-if property of $H_e$. Consequently, $Z\cong \mathbb{Z}_4$.

\vskip 3mm	
	Case $e=6:$
\vskip 3mm
By Definition \ref{He}, we have  $Z\cong \mathbb{Z}_{2^r3^s}$ with $r,s\geq 1$.
     If $r>1$ or $s>1$, then either $a=2^{r-1}3^s$ or $b=\pm2^r3^{s-1}$. Consequently,
   $l^{2^{r-1}3^s}+1=(l^6)^{2^{r-2}3^{s-1}}+1\equiv1^{2^{r-2}3^{s}-1}+1\equiv 2\pmod{m}$ or $l^{2b}+l^b+1=(l^6)^{\pm2^{r}\cdot3^{s-2}}+(l^6)^{\pm2^{r-1}3^{s-2}}+1\equiv1^{\pm2^{r}\cdot3^{s-2}}+1^{\pm2^{r-1}3^{s-2}}+1\equiv 3\pmod{m}$, which implies that $x^2=1$ or $y^3=1$, respectively.
    This implies that $x=1$ and $n(H_e,2)=1$; or $y=1$ and $n(H_e,3)=2$, as $(|M|,|Z|)=1$. This contradicts to the $k$-if property of $H_e$. Consequently, $Z\cong \mathbb{Z}_6$.
	
	The proof is complete.
\end{proof}

\begin{lemma}\label{Vnoif}
	Let $H_e(M,2^r3^s,l)=M\rtimes Z$ be a CI-group, where $Z=\langle z\rangle\cong\ZZ_{2^r3^s}$. Then $H_e(M,2^r3^s,l)$ has no $k$-if property.
\end{lemma}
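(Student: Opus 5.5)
Suppose, for a contradiction, that $H_e:=H_e(M,2^r3^s,l)=M\rtimes Z$ has the $k$-if property for some $k\geq 2$. By Lemma~\ref{Z=Z_2346} we may assume $Z\cong\ZZ_e$ with $e\in\{2,3,4,6\}$. Recall that $(|M|,|Z|)=1$ and $(l(l-1),m)=1$, where $m$ is the exponent of $M$. The plan is to count elements of small order. We use the necessary conditions noted before Lemma~\ref{subChar}: $k\mid n(H_e,2)$, and $2k\mid n(H_e,l)$ for $l\neq 2$. We also use that $\gcd$-type constraints must hold simultaneously for all orders. For each type we exhibit two orders whose counts force $k=1$.

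For an element $z^ax$ with $x\in M$ we have
\[
(z^ax)^{n}=z^{an}\,x^{1+l^a+\cdots+l^{a(n-1)}}.
\]
Since $l^a\not\equiv 1$ modulo any prime divisor of $m$ when $e\nmid a$, the exponent $1+l^a+\cdots+l^{a(o-1)}$ is $\equiv 0 \pmod m$ whenever $o=o(z^a)$, because $(l^a-1)$ is invertible and $l^{ao}\equiv 1$. Hence $o(z^ax)=o(z^a)$ for every $x\in M$, provided $a\not\equiv 0 \pmod e$. Note that $l^a-1$ is a unit: $l^a\not\equiv 1$ modulo each prime $p\mid m$ because $o(l \bmod p)=e$ from $(l-1,m)=1$ and $e$ minimal. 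This needs care; I would check it via the structure of $\ZZ_p^*$ and the condition $(l-1,m)=1$ applied to $l^{d}$ with $d\mid e$. Meanwhile every element of $M$ has order dividing $m$, which is coprime to $2$ (types $2,4,6$) or to $3$ (types $3,6$).

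The counts are then as follows. For type $2$, $n(H_2,2)=|M|$, which is odd, so $k$ is odd. There are no elements of order $4$. The key count is that the nontrivial coset $zM$ consists entirely of involutions, and these number $|M|$. Combining this with the counts of elements of order $p$ in $M$ (each divisible by $2k$) gives $|H_e|-1=2|M|-1\equiv |M|+(|M|-1)$. I would argue more directly as follows. In a CI-group, equivalent sets have equal intersections with each $\Aut(H_e)$-invariant subset. Thus $k$ divides $|M|$ (the involutions) and $2k$ divides $|M|-1$, forcing $k\mid \gcd(|M|,|M|-1)=1$.

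The same scheme handles the other types. For type $3$, the elements of order $3$ outside $M$ form $2|M|$ elements, so $2k\mid 2|M|$ and thus $k\mid |M|$. Meanwhile $M^*$ is a union of characteristic order classes, giving $2k\mid |M|-1$, so $k=1$. For type $4$, the elements of order $4$ are the two cosets $z^{\pm1}M$, numbering $2|M|$, and the involutions $z^2M$ number $|M|$. So $k\mid |M|$ and $k\mid |M|-1$. For type $6$, the involutions are $z^3M$, giving $k\mid|M|$, and $M^*$ gives $2k\mid |M|-1$.

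In every case the contradiction comes from $M$ being characteristic, which holds since it is the unique Hall subgroup of its order and normal. Therefore $M^*$ splits equally among the $S_i$, while some coset union counted by order is also split equally; coprimality of $|M|$ and $|M|-1$ then forces $k=1$. The main obstacle is the degenerate case $M=1$, where $H_e=\ZZ_e$ is cyclic. There I would argue directly: $\ZZ_2$ and $\ZZ_3$ fail since $|G^*|<2k$ or the order-$3$ elements are mutually inverse, and $\ZZ_4$ and $\ZZ_6$ have a unique involution.
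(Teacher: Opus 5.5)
Your central claim, that $o(z^ax)=o(z^a)$ whenever $a\not\equiv 0\pmod e$, is false in general. From $(l-1,m)=1$ and $e=o(l \bmod m)$ you only get that, for each prime $p\mid m$, $o(l \bmod p)$ is a divisor of $e$ other than $1$. You do not get that it equals $e$, and $(l-1,m)=1$ says nothing about $l^d-1$ for $1<d\mid e$. So $l^a-1$ is a unit modulo $m$ only when $\gcd(a,e)=1$: in that case $l^a\equiv 1$ and $l^e\equiv 1 \pmod p$ together force $l\equiv 1\pmod p$. The claim can therefore fail for $a=2$ in type $4$ and for $a\in\{2,3,4\}$ in type $6$.

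Two examples satisfying the definition:
\begin{itemize}
\item $m=15$, $l=2$ is of type $4$, but $l^2\equiv 1\pmod 3$.
\item $m=35$, $l=9$ is of type $6$, but $l^3\equiv 1\pmod 7$. So $z^3$ centralizes the Sylow $7$-subgroup of $M$, and only $5$ of the $35$ elements of $z^3M$ are involutions.
\end{itemize}
Hence your counts ``the involutions are $z^2M$'' (type $4$) and ``the involutions are $z^3M$'' (type $6$) are wrong. This is exactly why the paper works with the subgroups $M_2$ (type $4$) and $M_1\times M_3$ (type $6$) of $M$ inverted by $z^{e/2}$. Type $4$ is rescued by your correct count of the order-$4$ elements $z^{\pm1}M$. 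Your type-$6$ argument, however, rests only on the involution count and does not go through as written.

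The repair fits your framework: count elements of order exactly $e$.
\begin{itemize}
\item Since $(|M|,e)=1$, an element $z^ax$ with $\gcd(a,e)=d>1$ has order $(e/d)\,t$, where $t=o((z^ax)^{e/d})$ is coprime to $e$. So it never has order $e$, and neither does any element of $M$.
\item For $\gcd(a,e)=1$ your unit argument is valid, so every element of $z^aM$ has order $e$.
\item Thus $n(H_e,e)$ equals $|M|$ for $e=2$ and $2|M|$ for $e\in\{3,4,6\}$. In every type this gives $k\mid |M|$, using $k\mid n(H_e,2)$ when $e=2$ and $2k\mid n(H_e,e)$ when $e>2$.
\item Together with $2k\mid |M|-1$, which comes from the $\Aut(H_e)$-invariant set $M^*$, this yields $k=1$.
\end{itemize}

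With that fix your route is genuinely simpler than the paper's. You never need to identify the inverted subgroup, prove it characteristic, or apply the elementary-abelian classification to it. The paper instead gets $k\mid n(H_e,2)$, then uses the $k$-if property of that characteristic subgroup to obtain $2k\mid p^s-1$. Your observation that $M^*$ alone already forces $2k\mid |M|-1$ makes that second step unnecessary.
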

\begin{proof}
Let $m$ be the exponent of $M$.  The hypothesis that $H_e:=H_e(M,2^r3^s,l)$ is a CI-group, together with Proposition \ref{CIgroup}, implies that the Sylow subgroups of $H_e$ are all either elementary abelian or isomorphic to  $\ZZ_4,\ZZ_8,\ZZ_9, Q_8$. Following the proof of Lemma~\ref{Z=Z_2346}, we have that $M$ is a characteristic subgroup of $H_e$. On the contrast, we assume that $H_e$ has $k$-if property.
Then $M$ is a CI-group, which has $k$-if property. Hence, $k\mid n(M,2)$ and $2k\mid n(M,3)$. Further, we have that all the Sylow subgroups of $M$ are elementary abelian, by Lemma \ref{Z489}.
By Lemma \ref{Z=Z_2346}, we get $Z\cong \ZZ_e$ for each group $H_e$ of type $e$.
In what follows, we derive a contradiction based on this assumption by considering the cases $e \in \{2,3,4,6\}$ separately.

\vskip 3mm
    Case $e=2$ and $Z\cong\ZZ_2:$
\vskip 3mm
    In this case,  $l^2 \equiv 1 \pmod{m}$. Since $l^2 - 1 = (l-1)(l+1)$, it follows that for each prime divisor $p$ of $m$, either $l \equiv 1 \pmod{p}$ or $l \equiv -1 \pmod{p}$.
    It follows that $M$ can be written as  $M =M_1 \times M_2$, where $x^z=x$ for any $x\in M_1$ and $x^z=x^{-1}$ for any $x\in M_2$.
    Then $M_1$ centralizes $Z$, giving $H_e=M_1\times(M_2\rtimes Z)$.
     Note that $x^z=x^{-1}$ for any $x\in M_2$, which implies that $(zx)^2=1$. Consequently, we obtain $n(H_e,2)=|M_2|$ and $k\mid |M_2|$.
     We claim that $M_2$ is a characteristic subgroup of $H_e$.
     Indeed,
      for any $y\in M_2$ and $\sigma\in\Aut(H_e)$, there exists $y_0\in M_2$ such that $z=(y_0z)^\sigma$. Then $(y^{\sigma})^zy^{\sigma}=(z^{-1}y_0^{-1})^\sigma y^\sigma (y_0z)^\sigma y^\sigma=1$, which implies that $y^{\sigma}\in M_2$, as desired.

Lemma \ref{subChar} implies that $M_2$ has $k$-if property.
      Then for any prime divisor $p$ of $|M_2|$, we have $2k\mid p^s-1$ (with $p^s$ the largest prime-power divisor of $|M_2|$).
        Recall that $k\mid |M_2|$, so $k=1$, a contradiction.
\vskip 3mm
    Case $e=3$ and $Z\cong\ZZ_3:$
\vskip 3mm
In this case, $l^3\equiv 1\pmod{m}$. Since $l^3-1=(l-1)(l^2+l+1)$, it follows that for every prime divisor $p$ of $m$, either $l\equiv1 \pmod p$ or
$l^2+l\equiv -1\pmod p$.
Let $M_2$ be defined as $M_2:=\langle x\mid x^z\neq x,x\in M\rangle$.
For any $x\in M_2$, we have $(zx)^3=z^{3}x^{l^{2}+l+1}=z^{3}x^{-1+1}=z^{3}=1$, so $zx$ is of order $3$. A similar calculation gives $(z^2x)^3=z^{6}x^{l^{4}+l^2+1}=x^{l^4+l^2+1}$, and the congruence $l^2+l+1\equiv 0\pmod{p}$ forces $l^4+l^2+1\equiv l^4+(-l-1)+1\equiv l(l^3-1)=0\pmod{p}$, whence $z^2x$ also has order $3$. Consequently, $n(H_e,3)=2|M_2|$, and therefore $2k\mid 2|M_2|$.

By the same arguments as in the first case, we conclude that $M_2$ is a characteristic subgroup of $H_e$. Consequently, $M_2$ has $k$-if property.
It follows that for every prime $p$ dividing $|M_2|$, $2k$ divides $p^s - 1$, where $p^s$ is the largest prime-power divisor of $|M_2|$.
Together with the fact that $k \mid |M_2|$, we obtain $k = 1$, a contradiction.
\vskip 3mm
    Case $e=4$ and $Z\cong\ZZ_4:$
\vskip 3mm
	In this case, $l^4\equiv 1\pmod{m}$. Since $l^4-1=(l-1)(l+1)(l^2+1)$, it follows that for every prime divisor $p$ of $m$ either $l\equiv \pm1\pmod{p}$ or $l^2\equiv -1\pmod p$.
Hence, $M$ admits a subgroup $M_2$, where $(x_2^z)^z=x_2^{l^2}=x_2^{-1}$ for all $x_2 \in M_2$.
Let $z^ix$ be an involution for $x\in M$ and $1\leq i\leq 3$. Then $(z^ix)^2=z^{2i}x^{l^i+1}=1$, thus $i=2$, and then $x^{l^2+1}=1$. This implies that $x\in M_2$.
Then, by the same arguments as in the first case, we have that $M_2$ is a  characteristic subgroups of $H_e$, and have $k$-if property.  So we have $k\mid n(H_e,2)=|M_2|$, and $2k\mid p^s-1$, where $p^s$ is the largest prime-power divisor of $|M_2|$. Therefore, we have $k=1$, a contradiction.
\vskip 3mm
    Case $e=6$ and $Z\cong\ZZ_6:$
\vskip 3mm
	In this case, $l^6\equiv 1\pmod{m}$. Since $l^6-1=(l-1)(l+1)(l^2+l+1)(l^2-l+1)$, $M$ has a subgroup $M_1\times M_2\times M_3$, where the factors are characterized by the following congruences: $l\equiv -1\pmod{o(x_1)}$ for all $x_1\in M_1$; $(l^2+l)\equiv -1\pmod{o(x_2)}$ for all $x_2\in M_2$; and $(l^2-l)\equiv -1\pmod{o(x_3)}$ for all $x_3\in M_3$.
	Let $z^ix$ be an involution for $1\leq i\leq 5$. Then $(z^ix)^2=z^{2i}x^{l^i+1}=1$, thus $i=3$, and then $x^{l^3+1}=x^{(l+1)(l^2-l+1)}=1$ implies that $x\in M_1\times M_3$. So we have $k\mid n(H_6,2)=|M_1\times M_3|$.
By the same arguments as in the first case, we have that $M_1\times M_3$ is a characteristic subgroup of $H_e$.
  Then $2k\mid p^s-1$, where $p^s$ is the largest prime-power divisor of $|M_1\times M_3|$. Therefore, we have $k=1$, a contradiction.
	
In summary, the CI-group  $H_e=H_e(M,2^r3^s,l)$ has no $k$-if property for integers $k\geq 2$.	
\end{proof}

\vskip 3mm
\begin{center}{\large\bf Acknowledgements}\end{center}
\vskip 2mm
The first author thanks the supports of the National Natural Science Foundation of China (12301446).
The second author thanks the supports of Natural Science Foundation of Guangxi (2025GXNSFAA069013) and the National Natural Science Foundation of China (12571362).

\end{document}